\newcommand{\diag}{\mathop{\rm diag}}
\newcommand{\prox}{\mathrm{prox}}
\newcommand{\rec}{\mathrm{rec\,}}
\newcommand{\argmin}{\mathop{\rm argmin}}
\newcommand{\norm}[1]{\left\lVert#1\right\rVert}
\newcommand{\mnorm}[1]{{\left\vert\kern-0.25ex\left\vert\kern-0.25ex\left\vert #1 
    \right\vert\kern-0.25ex\right\vert\kern-0.25ex\right\vert}}
\newtheorem{theorem}{Theorem}
\newtheorem{lemma}{Lemma}
\newtheorem{remark}{Remark}
\newtheorem{proposition}{Proposition}
\newtheorem{assumption}{Assumption}
\newcommand{\eg}{{\it e.g.}}
\newcommand{\ie}{{\it i.e.}}
\title{\LARGE \bf Proportional-Integral Projected Gradient Method for Infeasibility Detection in Conic Optimization
}
\author{Yue~Yu and Ufuk~Topcu
\thanks{The authors are with the Oden Institute for Computational Engineering and Sciences, The University of Texas at Austin, Austin, TX, 78712, USA (e-mails:  yueyu@utexas.edu,utopcu@utexas.edu).}
}
\begin{document}

\maketitle
\thispagestyle{empty}
\pagestyle{empty}

\begin{abstract} 
A constrained optimization problem is primal infeasible if its constraints cannot be satisfied, and dual infeasible if the constraints of its dual problem cannot be satisfied. We propose a novel iterative method, named proportional-integral projected gradient method (PIPG), for detecting primal and dual infeasiblity in convex optimization with quadratic objective function and conic constraints. The iterates of PIPG either asymptotically provide a proof of primal or dual infeasibility, or asymptotically satisfy a set of primal-dual optimality conditions. Unlike existing methods, PIPG does not compute matrix inverse, which makes it better suited for large-scale and real-time applications. We demonstrate the application of PIPG in quasiconvex and mixed-integer optimization using examples in constrained optimal control.
\end{abstract}

\section{Introduction}
Conic optimization is the minimization of a quadratic objective function subject to conic constraints:
\begin{equation}\label{opt: conic}
    \begin{array}{ll}
    \underset{z}{\mbox{minimize}} & \frac{1}{2} z^\top P z+q^\top z \\
    \mbox{subject to} & Hz-g\in\mathbb{K}, \enskip z\in \mathbb{D},
    \end{array}
\end{equation}
where \(z\in\mathbb{R}^n\) is the optimization variable, symmetric positive semidefinite matrix \(P\in\mathbb{R}^{n\times n}\) and vector \(q\in\mathbb{R}^n\) define the quadratic objective function, matrix \(H\in\mathbb{R}^{m\times n}\) and vector \(g\in\mathbb{R}^m\) are constraint parameters, cone \(\mathbb{K}\subseteq\mathbb{R}^m\) and set \(\mathbb{D}\subseteq\mathbb{R}^n\) are nonempty, closed, and convex. Optimization \eqref{opt: conic} includes many common convex optimization problems as special cases, including linear,  quadratic, second-order cone, and semidefinite programing \cite{ben2001lectures,boyd2004convex}.

Optimization \eqref{opt: conic} is \emph{primal infeasible} if there exists no \(z\in\mathbb{D}\) such that \(Hz-g\in\mathbb{K}\), it is \emph{dual infeasible} if the constraints of its dual problem cannot be satisfied. If optimization \eqref{opt: conic} is not primal infeasible, the infeasibility of its dual problem implies that the primal optimal value is unbounded from below \cite{banjac2019infeasibility}.

Given optimization \eqref{opt: conic}, infeasibility detection is the problem of providing a proof of primal or dual infeasibility if it is the case \cite{banjac2019infeasibility}. In particular, a proof of primal infeasibility is the existence of a hyperplane that separates cone \(\mathbb{K}\) from the set \(\{Hz-g|z\in\mathbb{D}\}\). A proof of dual infeasibility is the existence of a vector \(\overline{z}\in\mathbb{R}^n\) that satisfies the following conditions for all \(z\in\mathbb{D}\) \cite{banjac2021minimal}:
\[P\overline{z}=0, \enskip q^\top \overline{z}<0, \enskip H\overline{z}\in\mathbb{K},\enskip  z+\overline{z}\in\mathbb{D}.\]
Infeasibility detection is necessary for  adjusting the parameters in pathological optimization problems \cite{barratt2021automatic}, and an integral subproblem in quasiconvex optimization \cite{boyd2004convex,agrawal2020disciplined} and mixed-integer optimization \cite{conforti2014integer,wolsey2020integer}.

Infeasibility detection via Douglas-Rachford splitting method (DRS) has recently attracted increasing attention \cite{raghunathan2014infeasibility,o2016conic,liu2019new,banjac2019infeasibility}. DRS detects infeasibility of optimization \eqref{opt: conic} by computing either a nonzero solution of the \emph{homogeneous self-dual embedding} \cite{o2016conic}, or the \emph{minimal-displacement vector} \cite{raghunathan2014infeasibility,liu2019new,banjac2019infeasibility}. Compared with traditional infeasibility detection methods \cite{ye1994nl,nesterov1999infeasible}, DRS  has empirically achieved up to three-orders-of-magnitude speedups in computation for relatively large-scale problems \cite{o2016conic}.

A challenge of DRS-based infeasibility detection is computing matrix inverse \cite{o2016conic,raghunathan2014infeasibility,liu2019new,banjac2019infeasibility}. Such computation becomes expensive as the size of optimization \eqref{opt: conic} increases, or when optimization \eqref{opt: conic} is solved repeatedly with different constraint parameters in real-time \cite{malyuta2021convex,malyuta2021advances}. While such matrix inverse is not necessary for problems with only linear constraints \cite{malitsky2017primal,applegate2021infeasibility}, whether it is necessary in general for optimization \eqref{opt: conic} is, to our best knowledge, still an open question. 

We propose a novel infeasibility detection method for optimization \eqref{opt: conic}, named \emph{proportional-integral projected gradient method (PIPG)}. The iterates of PIPG either asymptotically satisfy a set of primal-dual optimality conditions or diverge. In the latter case, the difference between the consecutive iterates converges to a nonzero vector, known as the minimal displacement vector, that proves primal or dual infeasibility.

PIPG is the first infeasibility detection method for optimization \eqref{opt: conic} that avoids computing matrix inverse. All existing methods compute matrix inverse as a subroutine of either interior point methods \cite{ye1994nl,nesterov1999infeasible} or DRS \cite{raghunathan2014infeasibility,o2016conic,liu2019new,banjac2019infeasibility}. In contrast, PIPG only computes matrix multiplication and projections onto the set \(\mathbb{D}\) and the cone \(\mathbb{K}\) in optimization \eqref{opt: conic}. As a result, PIPG allows implementation for large-scale and real-time problems  using limited computational resources.

PIPG provides an efficient method for quasiconvex and mixed-integer optimization problems. We demonstrate the application of PIPG in these problems using two examples in constrained optimal control: minimum-time landing and nonconvex path-planning of a quadrotor.

The rest of the paper is organized as follows. After some basic results in convex analysis and monotone operator theory, Section~\ref{sec: preliminary} reviews the existing results on DRS-based infeasibility detection. Section~\ref{sec: PIPG} introduces PIPG along with its convergence properties. Section~\ref{sec: experiment} demonstrates the application of PIPG in constrained optimal control problems. Finally, Section~\ref{sec: conclusions} concludes and comments on future work directions. 
\section{Preliminaries and related work}
\label{sec: preliminary}

We will review some basic results in convex analysis and monotone operator theory, and some existing results on infeasibilty detection using Douglas-Rachford splitting method. 

\subsection{Notations and preliminaries}
\label{subsec: preliminaries}
We let \(\mathbb{N}\), \(\mathbb{R}\) and \(\mathbb{R}_+\) denote the set of positive integer, real numbers, and non-negative real numbers, respectively. For a vector \(z\in\mathbb{R}^n\) and integers \(i, j\in\mathbb{N}\) (\( 1\leq i< j\leq n \)), we let \([z]_j\) denote the \(j\)-th element of vector \(z\), and \([z]_{i:j}\in\mathbb{R}^{j-i+1}\) denote the vector composed of the entries of \(z\) between (and including) the \(i\)-th entry and the \(j\)-th entry. For two vectors \(z, z'\in\mathbb{R}^n\), we let \(\langle z, z'\rangle\) denote their inner product, and \(\norm{z}\coloneqq \sqrt{\langle z, z\rangle}\) denote the \(\ell_2\) norm of \(z\). We let \(I_n\) denote the \(n\times n\) identity matrix and \(0_{m\times n}\) denote the \(m\times n\) zero matrix. When the dimensions of these matrices are clear from the context, we omit their subscripts. For a matrix \(H\in\mathbb{R}^{m\times n}\), \(H^\top\) denotes its transpose, \(\mnorm{H}\) denotes its largest singular value. For a square matrix \(A\in\mathbb{R}^{n\times n}\), \(\exp(A)\) denotes the matrix exponential of \(A\). For a set \(\mathbb{S}\), we let \(-\mathbb{S}\coloneqq\{z|-z\in\mathbb{S}\}\). Given two sets \(\mathbb{S}_1\) and \(\mathbb{S}_2\), we let \(\mathbb{S}_1\times\mathbb{S}_2\) denote their Cartesian product. We say function \(f:\mathbb{R}^n\to\mathbb{R}\cup \{+\infty\}\) is convex if \(f(\alpha z+(1-\alpha)z')\leq \alpha f(z)+(1-\alpha)f(z')\)
for all \(\alpha\in[0, 1]\). We say set \(\mathbb{D}\subseteq\mathbb{R}^n\) is convex if \(\alpha z+(1-\alpha)z'\in\mathbb{D}\) for any \(\alpha\in[0, 1]\) and \(z, z'\in\mathbb{D}\). We say set \(\mathbb{K}\subseteq\mathbb{R}^m\) is a convex cone if \(\mathbb{K}\) is a convex set and \(\alpha w\in\mathbb{K}\) for any \(w\in\mathbb{K}\) and \(\alpha\in\mathbb{R}_+\).

Let \(f:\mathbb{R}^n\to\mathbb{R}\cup \{+\infty\}\) be a convex function. The \emph{proximal operator} of function \(f\) is a function given by
\begin{equation}
    \prox_{f}(z)\coloneqq \underset{z'}{\argmin}\,\, f(z')+\frac{1}{2}\norm{z'-z}^2
\end{equation}
for all \(z\in\mathbb{R}^n\). Let \(\mathbb{D}\subseteq\mathbb{R}^n\) be a nonempty closed convex set. The \emph{projection} of \(z\in\mathbb{R}^n\) onto set \(\mathbb{D}\) is given by
\begin{equation}\label{eqn: projection}
\pi_{\mathbb{D}}[z]\coloneqq \underset{z'\in\mathbb{D}}{\argmin} \norm{z-z'}.
\end{equation}
The \emph{point-to-set distance} from \(z\) to set \(\mathbb{D}\) is given by
\begin{equation}\label{eqn: distance func}
    d(z|\mathbb{D})\coloneqq \underset{z'\in\mathbb{D}}{\min} \norm{z-z'}.
\end{equation}
The \emph{normal cone} of set \(\mathbb{D}\) at \(z\) is given by
\begin{equation}\label{eqn: normal cone}
    N_{\mathbb{D}}(z)\coloneqq\{y\in\mathbb{R}^n|\langle y, z'-z\rangle\leq 0, \forall z'\in\mathbb{D}\}.
\end{equation}
The \emph{recession cone} of set \(\mathbb{D}\) is a nonempty closed convex cone given by
\begin{equation}\label{eqn: rec cone}
    \rec \mathbb{D}\coloneqq \{y\in\mathbb{R}^n| y+z\in\mathbb{D}, \forall z\in\mathbb{D}\}.
\end{equation}
The \emph{indicator function} of set \(\mathbb{D}\) is defined as
\begin{equation}
    \delta_{\mathbb{D}}(z)\coloneqq \begin{cases}
    0, & \text{if } z\in\mathbb{D},\\
    +\infty, & \text{otherwise.}
    \end{cases}
\end{equation}
for all \(z\in\mathbb{R}^n\).
The \emph{support function} of set \(\mathbb{D}\) is given by
\begin{equation}\label{eqn: support}
    \sigma_{\mathbb{D}}(z)\coloneqq \sup_{y\in\mathbb{D}}\,\langle y, z\rangle
\end{equation}
for all \(z\in\mathbb{R}^n\).
Let \(\mathbb{K}\subseteq\mathbb{R}^m\) be a nonempty closed convex cone. The recession cone of cone \(\mathbb{K}\) is itself, \ie, \(\rec \mathbb{K}=\mathbb{K}\) \cite[Cor. 6.50]{bauschke2017convex}.
The \emph{polar cone} of \(\mathbb{K}\) is a closed convex cone given by
\begin{equation}\label{eqn: polar cone}
    \mathbb{K}^\circ\coloneqq \{w\in\mathbb{R}^m| \langle w, y\rangle\leq 0, \forall y\in\mathbb{K}\}.
\end{equation}
One can verify that \((\mathbb{K}^\circ)^\circ=\mathbb{K}\) \cite[Cor. 6.21]{rockafellar2009variational}. 
We will use the following results on polar cone.
\begin{lemma}\cite[Thm. 6.30]{bauschke2017convex}\label{lem: Moreau}
If \(\mathbb{K}\subset\mathbb{R}^m\) is a nonempty closed convex cone, then \(w=\pi_{\mathbb{K}}[w]+\pi_{\mathbb{K}^\circ}[w]\) and \(\langle\pi_{\mathbb{K}}[w], \pi_{\mathbb{K}^\circ}[w]\rangle=0\) for all \(w\in\mathbb{R}^m\).
\end{lemma}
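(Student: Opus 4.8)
The plan is to prove both assertions through the standard variational characterization of the Euclidean projection onto a nonempty closed convex set. Throughout, write \(p\coloneqq\pi_{\mathbb{K}}[w]\) and \(r\coloneqq w-p\); the goal is to show that \(r=\pi_{\mathbb{K}^\circ}[w]\) and \(\langle p, r\rangle=0\), which together yield the decomposition \(w=\pi_{\mathbb{K}}[w]+\pi_{\mathbb{K}^\circ}[w]\) and the orthogonality \(\langle\pi_{\mathbb{K}}[w], \pi_{\mathbb{K}^\circ}[w]\rangle=0\). The key tool is the fact that, for a nonempty closed convex set \(\mathbb{D}\), a point \(p\in\mathbb{D}\) equals \(\pi_{\mathbb{D}}[z]\) if and only if \(z-p\in N_{\mathbb{D}}(p)\), i.e. \(\langle z-p, z'-p\rangle\leq 0\) for every \(z'\in\mathbb{D}\); this follows from the definition \eqref{eqn: normal cone} together with the first-order optimality condition of the strongly convex minimization \eqref{eqn: projection}.

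First I would establish orthogonality by exploiting the cone structure. Applying the characterization with \(\mathbb{D}=\mathbb{K}\) gives \(\langle w-p, y-p\rangle\leq 0\) for all \(y\in\mathbb{K}\). Because \(\mathbb{K}\) is a convex cone it contains both \(0\) and \(2p\) (as \(0,2\in\mathbb{R}_+\)); substituting \(y=0\) yields \(\langle r, p\rangle\geq 0\), while substituting \(y=2p\) yields \(\langle r, p\rangle\leq 0\). Hence \(\langle r, p\rangle=0\), which is exactly the second claim once \(r\) is identified with \(\pi_{\mathbb{K}^\circ}[w]\).

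Next I would show \(r\in\mathbb{K}^\circ\) and that \(r\) is in fact the projection of \(w\) onto \(\mathbb{K}^\circ\). Feeding \(\langle r, p\rangle=0\) back into \(\langle r, y-p\rangle\leq 0\) collapses it to \(\langle r, y\rangle\leq 0\) for all \(y\in\mathbb{K}\), which is precisely the defining inequality \eqref{eqn: polar cone} of \(\mathbb{K}^\circ\); thus \(r\in\mathbb{K}^\circ\). To verify \(r=\pi_{\mathbb{K}^\circ}[w]\) I would check the same variational characterization with \(\mathbb{D}=\mathbb{K}^\circ\): since \(w-r=p\), it suffices to show \(\langle p, u-r\rangle\leq 0\) for all \(u\in\mathbb{K}^\circ\). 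Using \(\langle p, r\rangle=0\), this reduces to \(\langle p, u\rangle\leq 0\), which holds because \(p\in\mathbb{K}\) and \(u\in\mathbb{K}^\circ\), so \(\langle p, u\rangle\leq 0\) directly from \eqref{eqn: polar cone}. Therefore \(\pi_{\mathbb{K}^\circ}[w]=r=w-p\), and combining with the previous paragraph completes both claims.

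The only delicate point is the justification of the variational characterization of the projection, since the excerpt defines the normal cone \eqref{eqn: normal cone} but does not record this equivalence explicitly; I would either cite it as the standard projection theorem or derive it in a line from the optimality condition of \eqref{eqn: projection}. Everything else is a short chain of inner-product manipulations driven entirely by the cone identities \(0,2p\in\mathbb{K}\) and the definition of the polar cone, so I do not anticipate any substantive obstacle beyond carefully tracking which inequality is applied to which cone.
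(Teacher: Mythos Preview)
Your argument is correct and is essentially the standard proof of Moreau's decomposition. Note, however, that the paper does not supply its own proof of this lemma: it is quoted verbatim from \cite[Thm.~6.30]{bauschke2017convex} and used as a black box, so there is no in-paper argument to compare against. Your derivation is the textbook one and would be perfectly acceptable as a self-contained proof.

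One small remark on the ``delicate point'' you flag: the variational characterization of the projection you need is in fact recorded in the paper as Lemma~\ref{lem: projection} (in Appendix~\ref{app: lemA}), so you can cite it directly rather than re-deriving it from \eqref{eqn: projection} and \eqref{eqn: normal cone}.
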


We say function \(T:\mathbb{R}^p\to\mathbb{R}^p\) is a \emph{\(\gamma\)-averaged operator} for some \(\gamma\in(0, 1)\) if and only if the following condition holds for all \(\xi_1, \xi_2\in\mathbb{R}^p\) \cite[Prop. 4.35]{bauschke2017convex}:
\begin{equation}\label{eqn: def avg op}
\begin{aligned}
     &\norm{T(\xi_1)-T(\xi_2)}^2-\norm{\xi_1-\xi_2}^2\\
     &\leq \frac{\gamma-1}{\gamma}\norm{\xi_1-T(\xi_1)-\xi_2+T(\xi_2)}^2.
\end{aligned}
\end{equation}
We will use the following result on averaged operators.
\begin{lemma}\cite[Lem. 5.1]{banjac2019infeasibility}
\label{lem: min dis}
Let \(T:\mathbb{R}^p\to\mathbb{R}^p\) be a \(\gamma\)-averaged operator for some \(\gamma\in(0, 1)\). Let \(\xi^1\in\mathbb{R}^p\) and \(\xi^{j+1}\coloneqq T(\xi^j)\) for all \(j\in\mathbb{N}\). Then there exists \(\overline{\xi}\in\mathbb{R}^p\), known as the \emph{minimal-displacement vector} of operator \(T\), such that
\[\lim_{j\to\infty} \frac{\xi^j}{j}=\lim_{j\to\infty}\xi^{j+1}-\xi^j=\overline{\xi}.\]
\end{lemma}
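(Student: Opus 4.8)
My plan is to identify the limit $\overline{\xi}$ explicitly as the minimal-norm element of a fixed closed convex set, and then to establish the two claimed limits separately: the Cesàro-type limit $\xi^j/j$ is the more routine one, while the displacement limit $\xi^{j+1}-\xi^j$ is the genuinely hard one.

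First I would extract the quantitative consequences of averagedness. Writing $S\coloneqq I-T$ and substituting $T\xi=\xi-S\xi$ into \eqref{eqn: def avg op}, a short computation converts the averaged inequality into the cocoercivity estimate $\langle a-b,\,Sa-Sb\rangle\geq\frac{1}{2\gamma}\norm{Sa-Sb}^2$ for all $a,b\in\mathbb{R}^p$; in particular $T$ is nonexpansive and $S$ is monotone. Setting $v_j\coloneqq\xi^{j+1}-\xi^j$ and $d_j\coloneqq\norm{v_j}$, and applying \eqref{eqn: def avg op} to $\xi_1=\xi^{j}$, $\xi_2=\xi^{j-1}$, yields $d_{j-1}^2-d_j^2\geq\frac{1-\gamma}{\gamma}\norm{v_j-v_{j-1}}^2$. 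Since $\frac{1-\gamma}{\gamma}>0$, the sequence $(d_j)$ is nonincreasing, and telescoping gives $\sum_j\norm{v_j-v_{j-1}}^2<\infty$; hence $d_j\downarrow d$ for some $d\geq 0$ and $\norm{v_{j+1}-v_j}\to 0$ (asymptotic regularity of the displacements).

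Next I would pin down the candidate limit. Because $S$ is a continuous, everywhere-defined monotone operator, it is maximally monotone, so by the classical result that the closure of the range of a maximally monotone operator is convex, the set $C\coloneqq\overline{\operatorname{ran}(T-I)}$ is nonempty, closed, and convex. I would set $\overline{\xi}\coloneqq\pi_{C}[0]$, the unique minimal-norm element of $C$, and record that $\norm{\overline{\xi}}=\inf_{x}\norm{Tx-x}=:\mu$, together with the elementary fact (via the parallelogram law) that any sequence in $C$ whose norms converge to $\mu$ must converge to $\overline{\xi}$. For the first limit, observe $\xi^{j+1}-\xi^1=\sum_{k=1}^{j}v_k$, so $(\xi^{j+1}-\xi^1)/j$ is an average of elements of $\operatorname{ran}(T-I)\subseteq C$ and hence lies in $C$. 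A subadditivity (Fekete) argument applied to $a_j\coloneqq\norm{\xi^{j+1}-\xi^1}$ — using nonexpansiveness for $a_{j+k}\leq a_j+a_k$, convexity of $C$ for the lower bound $a_j/j\geq\mu$, and a crude triangle-inequality comparison against a near-minimizer of $\norm{Tx-x}$ for the matching upper bound — shows $a_j/j\to\mu$. Combining with the parallelogram fact gives $(\xi^{j+1}-\xi^1)/j\to\overline{\xi}$, and dividing out $\xi^1/j\to 0$ and reindexing yields $\xi^j/j\to\overline{\xi}$. This half uses only nonexpansiveness and convexity of $C$.

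The second limit is where the real difficulty lies, and is the step I expect to be the main obstacle. Since $\mathbb{R}^p$ is finite-dimensional and $(v_j)$ is bounded, it suffices to show that every convergent subsequence $v_{j_k}\to w$ satisfies $w=\overline{\xi}$; any such $w$ lies in $C$ with $\norm{w}=d$. Testing cocoercivity of $S$ at $\xi^{j_k}$ against points $u_i$ with $Tu_i-u_i\to\overline{\xi}$, dividing by $j_k$, and using $\xi^{j_k}/j_k\to\overline{\xi}$, already forces $\langle\overline{\xi},w\rangle=\mu^2$, i.e.\ $w-\overline{\xi}\perp\overline{\xi}$ and $d^2=\mu^2+\norm{w-\overline{\xi}}^2$. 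The crux is upgrading this to $d=\mu$, equivalently $\lim_j\norm{\xi^{j+1}-\xi^j}=\inf_x\norm{Tx-x}$; this is exactly where averagedness (through the full cocoercivity estimate, rather than mere nonexpansiveness) is indispensable, since for a general nonexpansive map — for instance a rotation — the displacements can fail to converge while their Cesàro averages still do. I would close this final step through the Baillon--Bruck--Reich asymptotics for averaged operators, exploiting that asymptotic regularity makes the limit set of $(v_j)$ a connected subset of $C$ on which the norm is constant. Once $d=\mu$ is established, uniqueness of the minimal-norm element of the convex set $C$ forces $w=\overline{\xi}$, so $v_j\to\overline{\xi}$, which coincides with the first limit and completes the proof.
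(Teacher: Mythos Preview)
The paper does not prove this lemma; it is quoted as a preliminary fact from \cite[Lem.~5.1]{banjac2019infeasibility}, so there is no in-paper argument to compare against and your sketch has to stand on its own.

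Your treatment of the Ces\`aro limit $\xi^j/j\to\overline{\xi}$ is correct and complete: Fekete subadditivity of $a_j=\norm{\xi^{j+1}-\xi^1}$ together with the parallelogram characterization of the nearest point in the closed convex set $C=\overline{\operatorname{ran}(T-I)}$ does the job. The orthogonality $\langle w-\overline{\xi},\overline{\xi}\rangle=0$ for every cluster point $w$ of $(v_j)$ is also correctly derived from cocoercivity and the projection property of $\overline{\xi}$.

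The gap is the last step. You correctly isolate the crux as $d=\mu$ but then appeal to ``Baillon--Bruck--Reich asymptotics'' and to connectedness of the limit set. Neither closes the argument. Citing Baillon--Bruck--Reich is circular here: the relevant statement in that line of work is precisely that $\norm{T^{j+1}x-T^jx}\to\inf_y\norm{Ty-y}$ for averaged $T$, which is what you are trying to prove. And connectedness is genuinely insufficient: the set $\{w\in C:\norm{w}=d,\ \langle w,\overline{\xi}\rangle=\mu^2\}$ is, when $d>\mu$, a $(p-2)$-sphere intersected with $C$, which can perfectly well be connected and nontrivial. Nothing in your outline rules that out.

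There is, however, a short elementary completion using only the inequality \eqref{eqn: def avg op} you already invoked. Fix any $y\in\mathbb{R}^p$, run the parallel orbit $y^j\coloneqq T^{j-1}y$, apply \eqref{eqn: def avg op} with $(\xi_1,\xi_2)=(\xi^j,y^j)$, and telescope over $j=1,\dots,N$ to obtain
\[
\sum_{j=1}^{N}\norm{v_j-(y^{j+1}-y^{j})}^2\ \le\ \frac{\gamma}{1-\gamma}\,\norm{\xi^1-y}^2.
\]
Hence $\norm{v_j-(y^{j+1}-y^{j})}\to 0$, and since $\norm{y^{j+1}-y^j}$ is nonincreasing and bounded above by $\norm{Ty-y}$, this forces $d\le\norm{Ty-y}$ for every $y$, i.e.\ $d\le\mu$. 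Combined with the trivial bound $d\ge\mu$ (each $v_j\in\operatorname{ran}(T-I)\subseteq C$), you get $d=\mu$. Then the projection inequality $\langle v_j,\overline{\xi}\rangle\ge\mu^2$ gives $\norm{v_j-\overline{\xi}}^2\le d_j^2-\mu^2\to 0$, which finishes $v_j\to\overline{\xi}$ without any external black box.
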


\subsection{Infeasibility detection via Douglas-Rachford splitting method}

Douglas-Rachford splitting method (DRS) is an iterative method for optimization problems of the following form:
\begin{equation}\label{opt: DRS}
    \begin{array}{ll}
        \underset{\xi}{\mbox{minimize}} & \ell(\xi)+r(\xi),
    \end{array}
\end{equation}
where \(\ell:\mathbb{R}^p\to\mathbb{R}\cup\{+\infty\}\) and \(r:\mathbb{R}^p\to\mathbb{R}\cup\{+\infty\}\) are convex functions. DRS uses the following iteration, where \(j\in\mathbb{N}\) is the iteration counter and \(\alpha\in(0, 2)\) is the step size:
\begin{equation}\label{eqn: DRS iter}
    \xi^{j+1}=\xi^j+\alpha(\prox_{\ell}(2\prox_{r}(\xi^j)-\xi^j)-\prox_{r}(\xi^j)).
\end{equation}
One can show that equation \eqref{eqn: DRS iter} is equivalent to \(\xi^{j+1}=T(\xi^j)\) for some \(\frac{\alpha}{2}\)-averaged operator \(T\) \cite{giselsson2016linear}. 

In the following, we will review two different ways of infeasibilty detection using DRS. For simplicity, we assume that DRS is terminated when a maximum number of iterations, denoted by \(k\) is reached. We also let \(\epsilon\in\mathbb{R}_+\) be such that numbers within the interval \([-\epsilon, \epsilon]\) are treated as approximately zero.
 
\subsubsection{Homogeneous self-dual embedding method}
The first way of infeasibility detection via DRS considers the following special case of optimization \eqref{opt: conic}:
\begin{equation}
    \begin{array}{ll}
    \underset{z}{\mbox{minimize}} & q^\top z \\
    \mbox{subject to} & Hz-g\in\mathbb{K}.
    \end{array}\label{opt: linear}
\end{equation}

The homogeneous self-dual embedding of optimization \eqref{opt: linear} is given by the following set of linear equations and conic inclusions \cite{ye1994nl,nesterov1999infeasible,o2016conic}:
\begin{equation}\label{opt: embedding}
\begin{aligned}
    &v=Su, \enskip u\in\overline{\mathbb{K}}, \enskip v\in -\overline{\mathbb{K}}^\circ,
\end{aligned}
\end{equation}
where 
\begin{equation}\label{eqn: S def}
    S=\begin{bmatrix}
    0 & H^\top & q\\
    -H & 0 & g\\
    -q^\top & -g^\top & 0
    \end{bmatrix}, \enskip \overline{\mathbb{K}}=\mathbb{R}^n\times (-\mathbb{K}^\circ)\times \mathbb{R}_+.
\end{equation}
Here \(-\mathbb{K}^\circ\) is also known as the \emph{dual cone} of \(\mathbb{K}\). Notice that the conditions in \eqref{opt: embedding} are satisfied if and only if
\(\xi=\begin{bmatrix}
u^\top & v^\top
\end{bmatrix}^\top\) is an optimal solution of optimization \eqref{opt: DRS} with
\begin{equation}\label{eqn: embedding lr}
    \begin{aligned}
        \ell(u, v)=&\delta_{\overline{\mathbb{K}}\times (- \overline{\mathbb{K}}^\circ)}(u, v), \enskip r(u, v)=\delta_{v=Su}(u, v),
    \end{aligned}
\end{equation}
where \(\delta_{v=Su}\) is the indicator function of set \(\{(u, v)|v=Su\}\).

If \(u, v\in\mathbb{R}^{m+n+1}\) satisfy \eqref{opt: embedding} without both being zero vectors, then one can detect the primal and dual infeasibility of optimization~\ref{opt: linear} as follows. Let 
\begin{equation*}
    \begin{aligned}
        &z=[u]_{1:n},\enskip w=[u]_{n+1:n+m},\\
        &\tau=[u]_{m+n+1},\enskip \kappa=[v]_{m+n+1}.
    \end{aligned}
\end{equation*}
If \(\tau>0\) and \(\kappa=0\), then \(\frac{1}{\tau}z\) satisfy the optimality conditions of optimization \eqref{opt: linear}. If \(\tau=0\) and \(\langle g, w\rangle<0\), then one can construct a proof of primal infeasibility. If \(\tau=0\) and \(\langle q, z\rangle<0\), then one can construct a proof of dual infeasibility. See \cite[Sec. 2.3]{o2016conic} for a detailed discussion. 

Algorithm~\ref{alg: embedding} summarizes the above infeasibility detection method, where the optimization \eqref{opt: DRS} defined by \eqref{eqn: embedding lr} is solved using DRS. In this case, the DRS iteration in \eqref{eqn: DRS iter} is equivalent to line \ref{eqn: DRS1 start}-\ref{eqn: DRS1 end} of Algorithm~\ref{alg: embedding} \cite[Sec. 3.2]{o2016conic}. Further, with a proper choice of initial values of \(u^1, v^1\) and a sufficiently large iteration number \(k\), the vectors \(u^k\) and \(v^k\) approximately satisfy \eqref{opt: embedding} without being both zero vectors \cite[Sec. 3.4]{o2016conic}. Notice that line~\ref{eqn: DRS1 start} computes the inverse of a symmetric matrix in \(\mathbb{R}^{(m+n+1)\times (m+n+1)}\).  

\begin{algorithm}[!ht]
\caption{DRS for optimization  \eqref{opt: linear} via homogeneous self-dual embedding}
\begin{algorithmic}[1]
\Require \(k, \epsilon\), initial values \(u^1, v^1\).
\Ensure \(z^\star\), ``Primal infeasible",``Dual Infeasible"
\For{\(j=1, 2, \ldots, k-1\)}
\State{\(\tilde{u}^{j+1}=(I+S)^{-1}(u^j+v^j)\)}\label{eqn: DRS1 start}
\State{\(u^{j+1}=\pi_{\overline{\mathbb{K}}}[\tilde{u}^{j+1}-v^j]\)}
\State{\(v^{j+1}=v^j-\tilde{u}^{j+1}+u^{j+1}\)}\label{eqn: DRS1 end}
\EndFor
\If{\([u^k]_{m+n+1}>\epsilon, [v^k]_{m+n+1}\leq \epsilon\)}\label{eqn: DRS1 test start}
\State {\Return{\(z^\star=\frac{1}{[u^k]_{m+n+1}}[u^k]_{1:n}\). }}
\ElsIf{\([u^k]_{m+n+1}\leq \epsilon\)}
\If{\(\langle g, [u^k]_{n+1:n+m}\rangle< -\epsilon\)}
\State {\Return{``Primal infeasible"}}
\EndIf
\If{\( \langle q, [u^k]_{1:n}\rangle< -\epsilon\)}
\State{\Return{``Dual infeasible"}}
\EndIf
\EndIf
\label{eqn: DRS1 test end}
\end{algorithmic}
\label{alg: embedding}
\end{algorithm}

\subsubsection{Minimal-displacement vector method}

Another way to detects the primal and dual infeasibility of optimization \eqref{opt: conic} via DRS is to compute the minimal-displacement vector of DRS. In particular, let
\[\overline{H} = \begin{bmatrix}
    H^\top &
    I
    \end{bmatrix}^\top,\enskip \overline{\mathbb{D}} = \{g+y|y\in\mathbb{K}\}\times \mathbb{D}.\]
Then one can rewrite optimization \eqref{opt: conic} by as an instance of optimization \eqref{opt: DRS} with \(\xi=\begin{bmatrix}
z^\top & y^\top
\end{bmatrix}^\top\) and
\begin{equation*}
    \begin{aligned}
        \ell(z, y)=&\frac{1}{2}z^\top P z+q^\top z+\delta_{\overline{H}z=y}(z, y),\enskip  r(z, y)= \delta_{\overline{\mathbb{D}}}(y),
    \end{aligned}
\end{equation*}
where \(\delta_{\overline{H}z=y}(z, y)\) is the indicator function of set \(\{z, y| \overline{H}z=y\}\). In addition, when DRS is applied to the above instance of optimization \eqref{opt: DRS}, the difference between the consecutive iterates converges to a limit, known as the minimal-displacement vector of DRS. If this limit is zero, the iterates of DRS asymptotically satisfies a set of primal-dual optimality conditions. If this limit is nonzero, the iterates of DRS asymptotically provide a proof for either primal or dual infeasibility \cite{banjac2019infeasibility,banjac2021asymptotic}.  

Algorithm~\ref{alg: splitting} summarizes the above infeasibility detection method, where the DRS iteration in \eqref{eqn: DRS iter} simplifies to line~\ref{eqn: DRS2 start}-\ref{eqn: DRS2 end} \cite[Sec. 4]{banjac2021asymptotic}. With a sufficient large iteration number \(k\), the vectors \(w^k-w^{k-1}\) and \(v^k-v^{k-1}\)  together give an approximation of the aforementioned minimal-displacement vector. Notice that line~\ref{eqn: M inverse} computes the inverse of a symmetric matrix in \(\mathbb{R}^{n\times n}\).  

\begin{algorithm}[!ht]
\caption{DRS for optimization \eqref{opt: conic} via minimal-displacement vector}
\begin{algorithmic}[1]
\Require \(k, \alpha, \epsilon\) initial values  \(z^1, y^1\).
\Ensure \(z^\star\), ``Primal infeasible", ``Dual infeasible".
\For{\(j=1, 2, \ldots, k-1\)}
\State{\(\tilde{y}^j=\pi_{\overline{\mathbb{D}}}[y^j]\)}\label{eqn: DRS2 start}
\State{\(w^j=y^j-\tilde{y}^j\)}
\State{\(\tilde{z}^j=(I+P+\overline{H}^\top \overline{H})^{-1}(z^j-q+\overline{H}^\top (2\tilde{y}^j-y^j))\)}\label{eqn: M inverse}
\State{\(z^{j+1}=z^j+\alpha(\tilde{z}^j-z^j)\)}
\State{\(y^{j+1}=y^j+\alpha(\overline{H}\tilde{z}^j-\tilde{y}^j)\)}\label{eqn: DRS2 end}
\EndFor
\If{\(\max\{\norm{w^k-w^{k-1}},\norm{z^k-z^{k-1}}\}\leq \epsilon\)}\label{eqn: DRS2 test start}
\State{\Return{\(z^\star=z^k\)}} 
\Else 
\If{\(\norm{w^k-w^{k-1}}> \epsilon\)}
\State{\Return{``Primal infeasible"}}
\EndIf
\If{\(\norm{z^k-z^{k-1}}> \epsilon\)}
\State{\Return{``Dual infeasible"}}
\EndIf
\EndIf\label{eqn: DRS2 test end}
\end{algorithmic}
\label{alg: splitting}
\end{algorithm}

\section{Proportional-integral projected gradient method}
\label{sec: PIPG}
We now introduce \emph{proportional-integral projected gradient method (PIPG)}, a primal-dual infeasibility detection method for optimization \eqref{opt: conic}. Provided that an optimal primal-dual solution exists, PIPG ensures that both the primal-dual gap and the constraint violation converge to zero along certain sequences of averaged iterates \cite{yu2020proportional,yu2021proportional}. In the following, we will show that the iterates of PIPG can also provide proof of primal or dual infeasibility.

Algorithm~\ref{alg: PIPG} summarizes PIPG for infeasibility detection, where \(k\in\mathbb{N}\) is the maximum number of iteration, \(\alpha>0\) is the step size, and \(\epsilon\in\mathbb{R}_+\) is chosen such that numbers in interval \([-\epsilon, \epsilon]\) are treated as approximately zero. Unlike Algorithm~\ref{alg: embedding} and Algorithm~\ref{alg: splitting}, Algorithm~\ref{alg: PIPG} does not compute matrix inverse. Furthermore, using Lemma~\ref{lem: Moreau}, one can show that the projections in Algorithm~\ref{alg: PIPG} are no more difficult to compute than those in Algorithm~\ref{alg: splitting}.

\begin{algorithm}[!ht]
\caption{PIPG for optimization \eqref{opt: conic} }
\begin{algorithmic}[1]
\Require \(k, \alpha, \epsilon\), initial values  \(z^1, v^1\).
\Ensure \(z^\star\), ``Primal infeasible", ``Dual infeasible".
\For{\(j=1, 2, \ldots, k-1\)}\label{alg: pipg for start}
\State{\(w^{j+1}= \pi_{\mathbb{K}^\circ}[v^j+\alpha(Hz^j-g)]\)}\label{alg: w}
\State{\(z^{j+1}=\pi_{\mathbb{D}}[z^j-\alpha( Pz^j+q+H^\top w^{j+1})]\)}\label{alg: z}
\State{\(v^{j+1}=w^{j+1}+\alpha H(z^{j+1}-z^j)\)}\label{alg: v}
\EndFor\label{alg: pipg for end}
\If{\(\max\{\norm{w^k-w^{k-1}},\norm{z^k-z^{k-1}}\}\leq \epsilon\)}\label{eqn: PIPG test start}
\State{\Return{\(z^\star=z^k\)}} 
\Else 
\If{\(\norm{w^k-w^{k-1}}> \epsilon\)}
\State{\Return{``Primal infeasible"}}
\EndIf
\If{\(\norm{z^k-z^{k-1}}> \epsilon\)}
\State{\Return{``Dual infeasible"}}
\EndIf
\EndIf\label{eqn: PIPG test end}
\end{algorithmic}
\label{alg: PIPG}
\end{algorithm}

The name proportional-integral projected gradient method (PIPG) is due to the fact that line~\ref{alg: w}-\ref{alg: v} of Algorithm~\ref{alg: PIPG} combine projected gradient method together with proportional-integral feedback of constraints violation \cite{yu2021proportional}. Similar ideas were first introduced in distributed optimization \cite{wang2010control,yu2020mass,yu2020rlc} and later extended to general conic optimization \cite{yu2020proportional,yu2021proportional}.

In the following, we will show that line~\ref{alg: w}-\ref{alg: v} in Algorithm~\ref{alg: PIPG} is the fixed-point iteration of an averaged operator. Further, the minimal displacement vector (see Lemma~\ref{lem: min dis}) of said averaged operator yield information regarding optimality and infeasibility. We will use the following assumptions on optimization \eqref{opt: conic} and the step size \(\alpha\) in Algorithm~\ref{alg: PIPG}.

\begin{assumption}\label{asp: opt}
\begin{enumerate}
    \item Matrix \(P\in\mathbb{R}^{n\times n}\) is symmetric and positive semidefinite.
    \item Set \(\mathbb{D}\subseteq\mathbb{R}^n\) and cone \(\mathbb{K}\subseteq\mathbb{R}^m\) are both nonempty, closed and convex. 
\end{enumerate}
\end{assumption}

\begin{assumption}\label{asp: step size}
The step size \(\alpha\) in Algorithm~\ref{alg: PIPG} satisfies
\begin{equation*}
    0<\alpha\leq \frac{8-\frac{4}{\gamma}}{\sqrt{\lambda^2+16\nu^2}+\lambda}
\end{equation*}
for some \(\gamma\in(\frac{1}{2}, 1)\), \(\lambda\geq \mnorm{P}\), and \(\nu\geq \mnorm{H}\). 
\end{assumption}

As our first step, the following proposition shows that, under Assumption~\ref{asp: opt} and Assumption~\ref{asp: step size}, line~\ref{alg: w}-\ref{alg: v} in Algorithm~\ref{alg: PIPG} is the fixed-point iteration of an averaged operator. 

\begin{proposition}\label{prop: PIPG avg op}
Suppose that Assumption~\ref{asp: opt} and Assumption~\ref{asp: step size} hold. Then line~\ref{alg: w}-\ref{alg: v} in Algorithm~\ref{alg: PIPG} implies that
\begin{equation}\label{eqn: pipg operator}
    \begin{bmatrix}
    z^{j+1}\\
    v^{j+1}
    \end{bmatrix}=T\left(\begin{bmatrix}
    z^j\\
    v^j
    \end{bmatrix}\right),
\end{equation}
where \(T:\mathbb{R}^{m+n}\to \mathbb{R}^{m+n}\) is a \(\gamma\)-averaged operator, where \(\gamma\in(\frac{1}{2}, 1)\) is given in Assumption~\ref{asp: step size}.
\end{proposition}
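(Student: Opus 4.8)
The plan is to verify the averagedness inequality \eqref{eqn: def avg op} directly for the map $T$ defined implicitly by line~\ref{alg: w}--\ref{alg: v}. First I would fix two points $\xi_1=(z_1,v_1)$ and $\xi_2=(z_2,v_2)$, run one iteration from each, and denote the resulting intermediate and output vectors by $w_i^+,z_i^+,v_i^+$ for $i=1,2$. Writing $\Delta z=z_1-z_2$, $\Delta v=v_1-v_2$, and likewise for the primed and $w^+$ quantities, the central tool is that the Euclidean projections $\pi_{\mathbb{K}^\circ}$ and $\pi_{\mathbb{D}}$ are firmly nonexpansive \cite{bauschke2017convex}. Applying this to line~\ref{alg: w} with inputs $v_i+\alpha(Hz_i-g)$ gives $\langle \Delta w^+,\,\Delta v+\alpha H\Delta z\rangle\geq \norm{\Delta w^+}^2$, and applying it to line~\ref{alg: z} with inputs $z_i-\alpha(Pz_i+q+H^\top w_i^+)$ gives $\langle \Delta z^+,\,\Delta z-\alpha P\Delta z-\alpha H^\top\Delta w^+\rangle\geq \norm{\Delta z^+}^2$. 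Note that the affine terms $-\alpha g$ and $\alpha q$ cancel in the differences, so they play no role.

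Next I would eliminate the intermediate variable using line~\ref{alg: v}, which in difference form reads $\Delta v^+=\Delta w^+ +\alpha H(\Delta z^+-\Delta z)$. Substituting this into the target inequality
\[\norm{\Delta z^+}^2+\norm{\Delta v^+}^2-\norm{\Delta z}^2-\norm{\Delta v}^2\leq \tfrac{\gamma-1}{\gamma}\bigl(\norm{\Delta z-\Delta z^+}^2+\norm{\Delta v-\Delta v^+}^2\bigr)\]
expands every term into inner products among the four ``fundamental'' differences $\Delta z,\Delta v,\Delta z^+,\Delta w^+$. Adding suitable nonnegative multiples of the two firm-nonexpansiveness inequalities to the right-hand side, the claim reduces to showing that a single quadratic form $Q(\Delta z,\Delta v,\Delta z^+,\Delta w^+)$ is nonpositive; this is where the computation becomes bookkeeping-intensive but conceptually straightforward.

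The main obstacle is this last quadratic-form estimate, where the step-size restriction of Assumption~\ref{asp: step size} enters. The cross terms in $Q$ involving $\langle \Delta z^+-\Delta z,\,\alpha H^\top\Delta w^+\rangle$ together with the curvature term $\langle\Delta z^+,\,\alpha P\Delta z\rangle$ are the only indefinite contributions; I would bound them with the Cauchy--Schwarz and Young inequalities using $\mnorm{P}\leq\lambda$ and $\mnorm{H}\leq\nu$, so that $Q$ is dominated by a block-diagonal form whose definiteness is governed by a single scalar condition on $\alpha$. Requiring this condition amounts to a quadratic inequality in $\alpha$ whose relevant root is $\frac{8-4/\gamma}{\sqrt{\lambda^2+16\nu^2}+\lambda}$: the discriminant of that quadratic is precisely what produces the term $\sqrt{\lambda^2+16\nu^2}$, while the positivity of the numerator $8-4/\gamma$, equivalent to $\gamma>\tfrac12$, is exactly what makes the target coefficient $\tfrac{\gamma-1}{\gamma}$ attainable. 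Tracking these constants so that the multipliers of the two projection inequalities and the step-size bound fit together is the delicate part; the remaining steps are routine algebraic expansion.
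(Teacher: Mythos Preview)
Your proposal is correct and follows essentially the same route as the paper: the paper also takes two arbitrary inputs, applies the variational characterization of the two projections (which, summed in pairs, is exactly your firm-nonexpansiveness inequalities), eliminates $w^+$ via the $v$-update, and then bounds the residual cross terms by completing the square together with $\mnorm{P}\leq\lambda$, $\mnorm{H}\leq\nu$, arriving at the scalar condition $\tfrac{\gamma\alpha^2\nu^2}{2\gamma-1}+\tfrac{\alpha\lambda}{2}\leq\tfrac{2\gamma-1}{\gamma}$ whose positive root is precisely the bound in Assumption~\ref{asp: step size}. The only cosmetic difference is that the paper works with four one-sided projection inequalities rather than your two firm-nonexpansiveness inequalities, but these are equivalent after pairing.
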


\begin{proof}
See Appendix~\ref{app: lemA}.
\end{proof}

\begin{remark}
In order to satisfy Assumption~\ref{asp: step size}, one needs to estimate the values of \(\mnorm{H}\) and \(\mnorm{P}\). The following \emph{power iteration} ensures that \(\norm{z^j}\) quickly converges to \(\mnorm{H}^2\) as the number of iteration \(j\) increases:
\begin{equation*}
    z^{j+1}=\frac{1}{\norm{z^j}} H^\top H z^j,
\end{equation*}
where the entries in vector \(z^1\in\mathbb{R}\) are randomly generated. Notice that the above power iteration does not compute matrix inverse or decomposition; see \cite{kuczynski1992estimating} for further details. The computation of \(\mnorm{P}\) is similar.
\end{remark}

Due to Proposition~\ref{prop: PIPG avg op} and Lemma~\ref{lem: min dis}, one can show that both \(\norm{z^k-z^{k-1}}\) and \(\norm{w^k-w^{k-1}}\) computed in Algorithm~\ref{alg: PIPG} converge to certain limits as \(k\) increases. To show that these limits yield information regarding optimality and infeasibility, we first introduce the following results.

\begin{lemma}\label{lem: dist func}
Suppose that Assumption~\ref{asp: opt} holds and the sequence \(\{w^j, z^j, v^j\}_{j\in\mathbb{N}}\) is computed recursively using line~\ref{alg: w}-\ref{alg: v} in Algorithm~\ref{alg: PIPG} where \(\alpha>0\). Let \(\lambda\geq \mnorm{P}\) and \(\nu\geq \mnorm{H}\). Then, for all \(j\geq 3\),
\begin{equation}
\begin{aligned}
    &d(-Pz^j-q-H^\top w^j|N_{\mathbb{D}}(z^j))\leq \left(\frac{1}{\alpha}+\lambda\right)\norm{z^j-z^{j-1}},\\
    & d(Hz^j-g|N_{\mathbb{K}^\circ}(w^j))\\
    &\leq \frac{1}{\alpha}\norm{w^j-w^{j-1}}+\nu\norm{z^j-2z^{j-1}+z^{j-2}}.
\end{aligned}
\end{equation}  
\end{lemma}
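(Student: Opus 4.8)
The plan is to exploit the variational characterization of the Euclidean projection: for a nonempty closed convex set $\mathbb{C}$ one has $u=\pi_{\mathbb{C}}[x]$ if and only if $x-u\in N_{\mathbb{C}}(u)$. Applying this to the two projections in line~\ref{alg: w} and line~\ref{alg: z} of Algorithm~\ref{alg: PIPG}, after a shift of the iteration index, produces \emph{explicit} elements of the normal cones $N_{\mathbb{D}}(z^j)$ and $N_{\mathbb{K}^\circ}(w^j)$. Since the point-to-set distance $d(\cdot\mid\mathbb{C})$ in \eqref{eqn: distance func} is bounded above by the distance to any single point of $\mathbb{C}$, it then suffices to bound the norm of the difference between the target vector and the exhibited normal-cone element.

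For the first inequality, I would write line~\ref{alg: z} at iteration $j-1$ as $z^j=\pi_{\mathbb{D}}[z^{j-1}-\alpha(Pz^{j-1}+q+H^\top w^j)]$. The projection characterization, together with the fact that $N_{\mathbb{D}}(z^j)$ is a cone (hence closed under multiplication by $\tfrac{1}{\alpha}>0$), gives
\[
-Pz^{j-1}-q-H^\top w^j+\tfrac{1}{\alpha}(z^{j-1}-z^j)\in N_{\mathbb{D}}(z^j).
\]
Subtracting this element from $-Pz^j-q-H^\top w^j$ cancels the $q$ and $H^\top w^j$ terms and leaves $(\tfrac{1}{\alpha}I-P)(z^j-z^{j-1})$; bounding its norm by $\mnorm{\tfrac{1}{\alpha}I-P}\,\norm{z^j-z^{j-1}}$ and using that $P$ is symmetric positive semidefinite with $\mnorm{P}\leq\lambda$ (so the eigenvalues of $\tfrac{1}{\alpha}I-P$ lie in $[\tfrac{1}{\alpha}-\lambda,\tfrac{1}{\alpha}]$, whence $\mnorm{\tfrac{1}{\alpha}I-P}\leq\tfrac{1}{\alpha}+\lambda$) yields the claimed bound.

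For the second inequality I would repeat the same maneuver on line~\ref{alg: w} at iteration $j-1$, obtaining
\[
\tfrac{1}{\alpha}(v^{j-1}-w^j)+(Hz^{j-1}-g)\in N_{\mathbb{K}^\circ}(w^j).
\]
Subtracting from $Hz^j-g$ leaves $H(z^j-z^{j-1})-\tfrac{1}{\alpha}(v^{j-1}-w^j)$. The key step, and the one place where line~\ref{alg: v} must be invoked, is to substitute $v^{j-1}=w^{j-1}+\alpha H(z^{j-1}-z^{j-2})$; this converts $H(z^j-z^{j-1})-H(z^{j-1}-z^{j-2})$ into the second-order difference $H(z^j-2z^{j-1}+z^{j-2})$ and isolates the term $\tfrac{1}{\alpha}(w^j-w^{j-1})$. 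A triangle inequality together with $\mnorm{H}\leq\nu$ then gives the stated bound; the appearance of $z^{j-2}$ is precisely why the statement requires $j\geq 3$.

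The routine parts are the index shifts and the triangle-inequality estimates. I expect the main obstacle to be the second inequality, specifically recognizing that one must eliminate the dual variable $v^{j-1}$ using line~\ref{alg: v} so that the proportional-integral update telescopes into a second-order difference in $z$; without that substitution the bound would be phrased in terms of $\norm{v^{j-1}-w^j}$ rather than the intended quantities $\norm{w^j-w^{j-1}}$ and $\norm{z^j-2z^{j-1}+z^{j-2}}$.
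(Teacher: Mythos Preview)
Your proposal is correct and follows essentially the same route as the paper: both arguments derive explicit normal-cone elements from the projection characterization of line~\ref{alg: z} and line~\ref{alg: w} at iteration $j-1$, substitute $v^{j-1}=w^{j-1}+\alpha H(z^{j-1}-z^{j-2})$ from line~\ref{alg: v} to expose the second-order difference, and finish with the triangle inequality and the bounds $\mnorm{P}\leq\lambda$, $\mnorm{H}\leq\nu$. The only cosmetic differences are that the paper performs the $v^{j-1}$ substitution \emph{before} writing the $w$-projection (obtaining $w^j=\pi_{\mathbb{K}^\circ}[w^{j-1}+\alpha(H(2z^{j-1}-z^{j-2})-g)]$ directly) and bounds $\norm{(\tfrac{1}{\alpha}I-P)(z^j-z^{j-1})}$ via the triangle inequality rather than the spectral argument you sketch.
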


\begin{proof}
See Appendix~\ref{app: dist func}.
\end{proof}

\begin{lemma}\label{lem: separating}
Suppose that Assumption~\ref{asp: opt} and Assumption~\ref{asp: step size} hold and the sequence \(\{w^j, z^j, v^j\}_{j\in\mathbb{N}}\) is computed recursively using line~\ref{alg: w}-\ref{alg: v} in Algorithm~\ref{alg: PIPG}. Then there exists \(\overline{z}\in\rec \mathbb{D}\) and \(\overline{w}\in\mathbb{K}^\circ\) such that \(\lim\limits_{j\to\infty} z^j-z^{j-1}=\overline{z}\) and \(\lim\limits_{j\to\infty}w^j-w^{j-1}=\overline{w}\).
Further,
\begin{equation}\label{eqn: pd inf}
\begin{aligned}
    &H\overline{z}\in\mathbb{K}, \enskip P\overline{z}=0, \enskip \langle q, \overline{z}\rangle=-\frac{1}{\alpha}\norm{\overline{z}}^2,\\ &\sigma_{\mathbb{D}}(-H^\top \overline{w})+\langle g, \overline{w}\rangle= -\frac{1}{\alpha}\norm{\overline{w}}^2.
\end{aligned}
\end{equation}
\end{lemma}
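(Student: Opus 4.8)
The plan is to combine Proposition~\ref{prop: PIPG avg op} and Lemma~\ref{lem: min dis} with the monotonicity and support-function characterizations of the normal cones generated by the two projections in line~\ref{alg: w}--\ref{alg: v}. First, applying Lemma~\ref{lem: min dis} to the \(\gamma\)-averaged operator \(T\) from Proposition~\ref{prop: PIPG avg op} yields \(\overline\xi=(\overline{z},\overline{v})\) with \(z^{j+1}-z^j\to\overline{z}\), \(v^{j+1}-v^j\to\overline{v}\), and \(\xi^j/j\to\overline\xi\). Since line~\ref{alg: v} gives \(w^j=v^j-\alpha H(z^j-z^{j-1})\) and the second differences satisfy \(z^{j+1}-2z^j+z^{j-1}\to0\), I get \(w^{j+1}-w^j\to\overline{v}\eqqcolon\overline{w}\) and \(w^j/j\to\overline{w}\); reindexing gives the claimed limits. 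Because \(w^j\in\mathbb{K}^\circ\) and \(z^j\in\mathbb{D}\) for all large \(j\), dividing by \(j\) and invoking closedness of the cone \(\mathbb{K}^\circ\) and the recession-cone characterization of the nonempty closed convex set \(\mathbb{D}\) give \(\overline{w}\in\mathbb{K}^\circ\) and \(\overline{z}\in\rec\mathbb{D}\).

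Next I would rewrite the two projections as normal-cone inclusions. Line~\ref{alg: z} gives \(a^{j+1}\coloneqq z^j-z^{j+1}-\alpha(Pz^j+q+H^\top w^{j+1})\in N_{\mathbb{D}}(z^{j+1})\), and Lemma~\ref{lem: Moreau} applied to line~\ref{alg: w} gives \(b^{j+1}\coloneqq v^j+\alpha(Hz^j-g)-w^{j+1}=\pi_{\mathbb{K}}[v^j+\alpha(Hz^j-g)]\in\mathbb{K}\) with \(\langle b^{j+1},w^{j+1}\rangle=0\) and \(b^{j+1}\in N_{\mathbb{K}^\circ}(w^{j+1})\). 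Since \(b^{j+1}/j\to\alpha H\overline{z}\) and \(\mathbb{K}\) is a closed cone, \(H\overline{z}\in\mathbb{K}\). Applying monotonicity of \(N_{\mathbb{D}}\) and \(N_{\mathbb{K}^\circ}\) to consecutive inclusions and passing to the limit (the second differences vanish) gives \(\langle P\overline{z},\overline{z}\rangle+\langle H\overline{z},\overline{w}\rangle\le0\) and \(\langle H\overline{z},\overline{w}\rangle\ge0\); combined with \(P\succeq0\) and the polar-cone inequality \(\langle H\overline{z},\overline{w}\rangle\le0\), these force \(\langle H\overline{z},\overline{w}\rangle=0\) and \(P\overline{z}=0\).

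For the two scalar identities in \eqref{eqn: pd inf}, I would set \(A\coloneqq\langle q,\overline{z}\rangle+\tfrac1\alpha\norm{\overline{z}}^2\) and \(B\coloneqq\sigma_{\mathbb{D}}(-H^\top\overline{w})+\langle g,\overline{w}\rangle+\tfrac1\alpha\norm{\overline{w}}^2\), and prove \(A\ge0\), \(B\ge0\), and \(A+B=0\). Testing \(a^{j+1}\in N_{\mathbb{D}}(z^{j+1})\) against the recession direction \(\overline{z}\), using \(\langle Pz^j,\overline{z}\rangle=\langle z^j,P\overline{z}\rangle=0\) and discarding the nonpositive term \(\langle w^{j+1},H\overline{z}\rangle\), gives \(A\ge0\); the symmetric test of \(b^{j+1}\in N_{\mathbb{K}^\circ}(w^{j+1})\) against \(\overline{w}\), together with \(\langle z^j,H^\top\overline{w}\rangle\ge-\sigma_{\mathbb{D}}(-H^\top\overline{w})\) for \(z^j\in\mathbb{D}\), gives \(B\ge0\). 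To get \(A+B=0\) I would add the exact per-step relations \(\sigma_{\mathbb{D}}(a^{j+1})=\langle a^{j+1},z^{j+1}\rangle\) and \(\langle b^{j+1},w^{j+1}\rangle=0\); the coupling terms combine into \(-\alpha\langle w^{j+1},H(z^{j+1}-z^j)\rangle\), after which dividing by \(j\), using \(\tfrac1j\sigma_{\mathbb{D}}(a^{j+1})=\sigma_{\mathbb{D}}(a^{j+1}/j)\) with \(a^{j+1}/j\to-\alpha H^\top\overline{w}\), and invoking \(P\overline{z}=0\), \(\langle H\overline{z},\overline{w}\rangle=0\), and \(\overline{v}=\overline{w}\) to kill the divergent cross terms collapses the identity to \(\alpha\sigma_{\mathbb{D}}(-H^\top\overline{w})=-\norm{\overline{z}}^2-\alpha\langle q,\overline{z}\rangle-\norm{\overline{w}}^2-\alpha\langle g,\overline{w}\rangle\), i.e.\ \(A+B=0\). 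With \(A,B\ge0\) this yields \(A=B=0\), which are exactly the two equalities in \eqref{eqn: pd inf}.

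The main obstacle is this last limit. The iterates \(z^j,w^j\) diverge linearly, so expressions such as \(\langle Pz^j,z^{j+1}\rangle\) and \(\langle w^{j+1},H(z^{j+1}-z^j)\rangle\) are products of a diverging factor with a vanishing one, and showing their \(1/j\)-scaled limits vanish relies not only on the exact cancellations \(P\overline{z}=0\) and \(\langle H\overline{z},\overline{w}\rangle=0\) but also on a growth bound for the shifted iterates \(z^j-j\overline{z}\) and \(w^j-j\overline{w}\). Moreover, identifying \(\lim_j\tfrac1j\sigma_{\mathbb{D}}(a^{j+1})\) with \(\alpha\sigma_{\mathbb{D}}(-H^\top\overline{w})\) needs care, since the support function is only lower semicontinuous. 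I expect these asymptotic estimates, rather than any of the algebraic manipulations, to be where the real work of the proof lies.
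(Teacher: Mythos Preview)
Your plan is sound and matches the paper's overall architecture: establish the minimal-displacement limits, extract the cone memberships and $P\overline{z}=0$, prove the two one-sided bounds $A\ge0$, $B\ge0$, and then show $A+B\le0$ via a scaled support-function identity. The paper carries out the last step differently: it quotes an auxiliary result of Banjac (stated in the appendix as Lemma~\ref{lem: seq proj}) asserting that for any sequence $y^j$ with $y^j/j\to\overline{y}$ one has $\lim_j \tfrac{1}{j}\langle \pi_{\mathbb{D}}[y^j],\, y^j-\pi_{\mathbb{D}}[y^j]\rangle=\sigma_{\mathbb{D}}\bigl(\pi_{(\rec\mathbb{D})^\circ}[\overline{y}]\bigr)$. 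Applied to $\hat z^j$ and $\hat w^j$ this delivers the \emph{exact} limits $\tfrac{1}{j}\langle z^j,\hat z^j-z^j\rangle\to\sigma_{\mathbb{D}}(-\alpha H^\top\overline{w})$ and $\tfrac{1}{j}\langle w^j,\hat w^j-w^j\rangle\to0$; from the latter the paper also deduces that the cross term $\tfrac{1}{j}\langle H^\top w^j,z^{j-1}\rangle$ converges, and then sums everything to get the reverse inequality.

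The two concerns in your final paragraph are unfounded, and your route actually goes through without Banjac's lemma. First, lower semicontinuity of $\sigma_{\mathbb{D}}$ gives $\liminf_j \sigma_{\mathbb{D}}(a^{j+1}/j)\ge\sigma_{\mathbb{D}}(-\alpha H^\top\overline{w})$, and this is precisely the direction you need for $A+B\le0$; no equality of limits is required. Second, no growth bound on $z^j-j\overline{z}$ is needed: after your cancellation the $H$-coupling term is $-\alpha\langle w^{j+1}/j,\, H(z^{j+1}-z^j)\rangle$, a product of two \emph{convergent} sequences, hence tending to $-\alpha\langle\overline{w},H\overline{z}\rangle=0$. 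The only remaining ``divergent'' piece is $-\tfrac{\alpha}{j}\langle Pz^j,z^{j+1}\rangle$; split it as
\[
-\tfrac{\alpha}{j}\langle Pz^j,z^j\rangle-\alpha\bigl\langle z^j/j,\,P(z^{j+1}-z^j)\bigr\rangle,
\]
where the first summand is $\le0$ (so it only helps the upper bound) and the second tends to $\langle\overline{z},P\overline{z}\rangle=0$ since $z^j/j\to\overline{z}$ and $P(z^{j+1}-z^j)\to P\overline{z}=0$. Thus your identity $\sigma_{\mathbb{D}}(a^{j+1}/j)=\tfrac{1}{j}\langle a^{j+1},z^{j+1}\rangle+\tfrac{1}{j}\langle b^{j+1},w^{j+1}\rangle$ yields $\limsup_j\sigma_{\mathbb{D}}(a^{j+1}/j)\le -\|\overline{z}\|^2-\alpha\langle q,\overline{z}\rangle-\|\overline{w}\|^2-\alpha\langle g,\overline{w}\rangle$, which combined with the lsc bound gives $A+B\le0$. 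In short, your approach is a self-contained alternative to the paper's: the paper trades your $\limsup/\liminf$ sandwich for a single black-box lemma that supplies exact limits.
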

\begin{proof}
See Appendix~\ref{app: lemB}.
\end{proof}

Equipped with Lemma~\ref{lem: dist func} and Lemma~\ref{lem: separating}, we summarize our main theoretical contribution in the following theorem.
 
\begin{theorem}\label{thm: infeasibility}
Suppose that Assumption~\ref{asp: opt} and Assumption~\ref{asp: step size} hold and the sequence \(\{w^j, z^j, v^j\}_{j\in\mathbb{N}}\) is computed recursively using line~\ref{alg: w}-\ref{alg: v} in Algorithm~\ref{alg: PIPG}. Then there exist \(\overline{z}\in\rec\mathbb{D}\) and \(\overline{w}\in\mathbb{K}\) such that
\begin{equation}\label{eqn: limit points}
    \lim_{j\to\infty}z^j-z^{j-1}=\overline{z}, \enskip \lim_{j\to\infty}w^j-w^{j-1}=\overline{w}.
\end{equation}
Further, the following three statements hold.
\begin{enumerate}
    \item If \(\norm{\overline{z}}=\norm{\overline{w}}=0\), then
\begin{equation}
\begin{aligned}
    &\lim_{j\to\infty} d(-Pz^j-q-H^\top w^j|N_{\mathbb{D}}(z^j))=0,\\
    &\lim_{j\to\infty} d(Hz^j-g| N_{\mathbb{K}^\circ}(w^j))=0.
\end{aligned}\label{eqn: optimality limit}
\end{equation}    
    \item If \(\norm{\overline{w}}> 0\), then 
    \begin{equation}\label{eqn: separating}
     \inf_{z\in\mathbb{D}}\,\langle Hz-g, \overline{w}\rangle>0=\sup_{y\in\mathbb{K}}\,\langle y, \overline{w}\rangle .  
    \end{equation}
    \item If \(\norm{\overline{z}}> 0\), then
    \begin{equation}\label{eqn: improving}
    H\overline{z}\in\mathbb{K}, \enskip P\overline{z}=0, \enskip \langle q, \overline
    z\rangle<0.
    \end{equation}
\end{enumerate}
\end{theorem}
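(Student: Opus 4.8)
The plan is to assemble the theorem from the two preceding lemmas, since Lemma~\ref{lem: separating} and Lemma~\ref{lem: dist func} already carry essentially all of the analytic weight. First I would dispose of the existence claim~\eqref{eqn: limit points}: the convergence \(\lim_{j\to\infty}z^j-z^{j-1}=\overline{z}\) and \(\lim_{j\to\infty}w^j-w^{j-1}=\overline{w}\), together with \(\overline{z}\in\rec\mathbb{D}\) and the cone membership of \(\overline{w}\), is exactly the content of Lemma~\ref{lem: separating}. The remaining relations collected in~\eqref{eqn: pd inf}, namely \(H\overline{z}\in\mathbb{K}\), \(P\overline{z}=0\), \(\langle q,\overline{z}\rangle=-\tfrac{1}{\alpha}\norm{\overline{z}}^2\), and \(\sigma_{\mathbb{D}}(-H^\top\overline{w})+\langle g,\overline{w}\rangle=-\tfrac{1}{\alpha}\norm{\overline{w}}^2\), are the algebraic identities I will feed into each of the three cases.

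For statement~(1) I would pass to the limit \(j\to\infty\) in the two inequalities of Lemma~\ref{lem: dist func}. Under \(\norm{\overline{z}}=\norm{\overline{w}}=0\) we have \(\norm{z^j-z^{j-1}}\to 0\) and \(\norm{w^j-w^{j-1}}\to 0\), so the only term that is not immediately controlled is the second difference \(z^j-2z^{j-1}+z^{j-2}\). The key observation is that \(z^j-2z^{j-1}+z^{j-2}=(z^j-z^{j-1})-(z^{j-1}-z^{j-2})\to\overline{z}-\overline{z}=0\) by~\eqref{eqn: limit points}, and this holds regardless of whether \(\overline{z}\) vanishes; hence both right-hand sides in Lemma~\ref{lem: dist func} tend to zero, giving~\eqref{eqn: optimality limit}.

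Statement~(2) is the one place where a sign-careful manipulation is needed. From Lemma~\ref{lem: separating} we have \(\overline{w}\in\mathbb{K}^\circ\), so by definition of the polar cone \(\langle y,\overline{w}\rangle\leq 0\) for every \(y\in\mathbb{K}\), while \(0\in\mathbb{K}\) shows the supremum is attained there, giving \(\sup_{y\in\mathbb{K}}\langle y,\overline{w}\rangle=0\); this is the right-hand equality. For the left-hand inequality I would rewrite \(\inf_{z\in\mathbb{D}}\langle Hz-g,\overline{w}\rangle=\inf_{z\in\mathbb{D}}\langle z,H^\top\overline{w}\rangle-\langle g,\overline{w}\rangle=-\sigma_{\mathbb{D}}(-H^\top\overline{w})-\langle g,\overline{w}\rangle\), using only the definition~\eqref{eqn: support} of the support function, and then substitute the last identity of~\eqref{eqn: pd inf} to obtain \(\inf_{z\in\mathbb{D}}\langle Hz-g,\overline{w}\rangle=\tfrac{1}{\alpha}\norm{\overline{w}}^2\), which is strictly positive because \(\norm{\overline{w}}>0\) and \(\alpha>0\).

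Statement~(3) then follows by simply reading off~\eqref{eqn: pd inf}: \(H\overline{z}\in\mathbb{K}\) and \(P\overline{z}=0\) are listed verbatim, and \(\langle q,\overline{z}\rangle=-\tfrac{1}{\alpha}\norm{\overline{z}}^2<0\) since \(\norm{\overline{z}}>0\). The genuine difficulty of the result lives in the appendices that establish Lemmas~\ref{lem: separating} and~\ref{lem: dist func}; at the level of the theorem itself the only traps are the second-difference cancellation in statement~(1) and the sign bookkeeping that turns the support-function identity into the separation statement~\eqref{eqn: separating} in statement~(2).
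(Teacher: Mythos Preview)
Your proposal is correct and follows essentially the same route as the paper: the existence of the limits and the cone memberships come from Lemma~\ref{lem: separating}, statement~(1) follows by passing to the limit in Lemma~\ref{lem: dist func}, and statements~(2) and~(3) are read off from~\eqref{eqn: pd inf} together with the polar-cone/support-function identities. Your write-up is in fact slightly more explicit than the paper's, particularly the second-difference cancellation \(z^j-2z^{j-1}+z^{j-2}\to 0\) in statement~(1) and the derivation \(\inf_{z\in\mathbb{D}}\langle Hz-g,\overline{w}\rangle=-\sigma_{\mathbb{D}}(-H^\top\overline{w})-\langle g,\overline{w}\rangle=\tfrac{1}{\alpha}\norm{\overline{w}}^2\) in statement~(2).
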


\begin{proof} Lemma~\ref{lem: min dis} implies that the limits in \eqref{eqn: limit points} hold for some \(\overline{z}\in\rec\mathbb{D}\) and \(\overline{w}\in\mathbb{K}^\circ\). Further, the first statement is due to Lemma~\ref{lem: dist func} and \eqref{eqn: limit points}. The second and the third statement are due to \eqref{eqn: pd inf} and the fact that \(\sigma_{\mathbb{D}}(-H^\top \overline{w})=-\inf_{z\in\mathbb{D}}\,\langle H z, \overline{w}\rangle\) and \(\sup_{y\in\mathbb{K}}\,\langle y, \overline{w}\rangle=0\) when \(\overline{w}\in\mathbb{K}^\circ\). The latter fact is due to the definition of polar cone in \eqref{eqn: polar cone}.
\end{proof}

We now discuss the implications of the three statements in Theorem~\ref{thm: infeasibility}. First, the two limits in \eqref{eqn: optimality limit} imply that a set of primal-dual optimality conditions are satisfied asymptotically. To see this implication, let \(z^\star\in\mathbb{D}\) and \(w^\star\in\mathbb{K}^\circ\) be such that
\[d(-Pz^\star-q-H^\top w^\star|N_{\mathbb{D}}(z^\star))= d(Hz^\star-g| N_{\mathbb{K}^\circ}(w^\star))=0.\]
Since the point-to-set distance is nonnegative, the above conditions are equivalent to the following primal-dual optimality conditions of optimization~\eqref{opt: conic} \cite[Ex.11.52]{rockafellar2009variational}:
\begin{subequations}\label{eqn: optimality}
\begin{align}
     -Pz^\star-q-H^\top w^\star\in & N_{\mathbb{D}}(z^\star),\\
    Hz^\star-g\in & N_{\mathbb{K}^\circ}(w^\star).\label{eqn: dual normal}
\end{align}
\end{subequations}
Provided that the constraints in optimization~\eqref{opt: conic} satisfy certain qualification, the conditions in \eqref{eqn: optimality} imply that \((z^\star, w^\star)\) is an optimal primal-dual solution of optimization \eqref{opt: conic} \cite[Cor. 11.51]{rockafellar2009variational}. We note that even if the conditions in \eqref{eqn: optimality limit} hold, optimization \eqref{opt: conic} can still be infeasible, \ie, its constraints cannot be satisfied. In this case, however, an infinitesimal perturbation of vector \(g\) will make optimization \eqref{opt: conic} either feasible or infeasible. See \cite[Sec. 6.3]{banjac2019infeasibility} for an example.

Second, the strict inequality in \eqref{eqn: separating} implies that \(\langle\overline{w}, y\rangle=0\) is a hyperplane that separates cone \(\mathbb{K}\) from the set \(\{Hz-g|z\in\mathbb{D}\}\). See Fig.~\ref{fig: seperate} for an illustration. As a result,  optimization \eqref{opt: conic} is infeasible.

\begin{figure}
\centering
    \begin{tikzpicture}[scale=0.35]
    
    \coordinate (O) at (0, 0) ;
    \coordinate (K1) at (30:4);
    \coordinate (K2) at (-30:4);
    
    \filldraw[draw=blue!10, fill=blue!15] (O) -- (K1) -- (K2) -- cycle;
    
   \draw[-] (O) -- (K1);
   \draw[-] (O) -- (K2);
 
   \draw [black,fill=blue!15] (-5, 0) ellipse (3.5 and 2.5);

   \node[label=right:{\scriptsize $\mathbb{K}$}] at (1.2, 0) {};
   \node at (-5, 0) {\scriptsize$\{Hz-g|z\in\mathbb{D}\}$};
   
   \draw[thick, red] (-120:4) -- (60:4);
   \node[label=right:{\scriptsize $\langle \overline{w}, y\rangle=0$}] at (-120:3.5) {};
   
   \fill (O) circle [radius=2pt];
   \end{tikzpicture}
   \caption{An illustration of the seperating hyperplane.}
   \label{fig: seperate}
\end{figure}
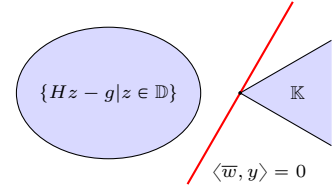

Third, if there exists \(z\in\mathbb{D}\) such that \(Hz-g\in\mathbb{K}\), then the conditions in \eqref{eqn: improving} imply that the optimal value of optimization \eqref{opt: conic} is unbounded. In particular, let \(y=z+\overline
z\), then \eqref{eqn: improving} and the fact that \(\overline{z}\in\rec \mathbb{D}\) and \(H\overline{z}\in\mathbb{K}\) imply the following:
\[y\in\mathbb{D},\enskip  Hy-g\in\mathbb{K}, \enskip \frac{1}{2}y^\top Py+q^\top y<\frac{1}{2}z^\top Pz+q^\top z.\]
Hence the value of the objective function in optimization \eqref{opt: conic} strictly decreases along direction \(\overline{z}\) without violating the constraints. By repeating a similar process we can show the the optimal value of optimization \eqref{opt: conic} is indeed unbounded from below. Furthermore, the dual problem of optimization \eqref{opt: conic} is infeasible \cite{banjac2019infeasibility,banjac2021asymptotic}.

\section{Applications in constrained optimal control}
\label{sec: experiment}

We demonstrate the application of PIPG in constrained optimal control problems. In particular, we will show how to use PIPG to compute minimum-time landing trajectory in Section~\ref{subsec: quasi cvx}, and reduce the number of binary variables in nonconvex path-planning problem in Section~\ref{subsec: mixed}.

\begin{figure}[!ht]
    \centering
    \includegraphics[width=0.5\linewidth]{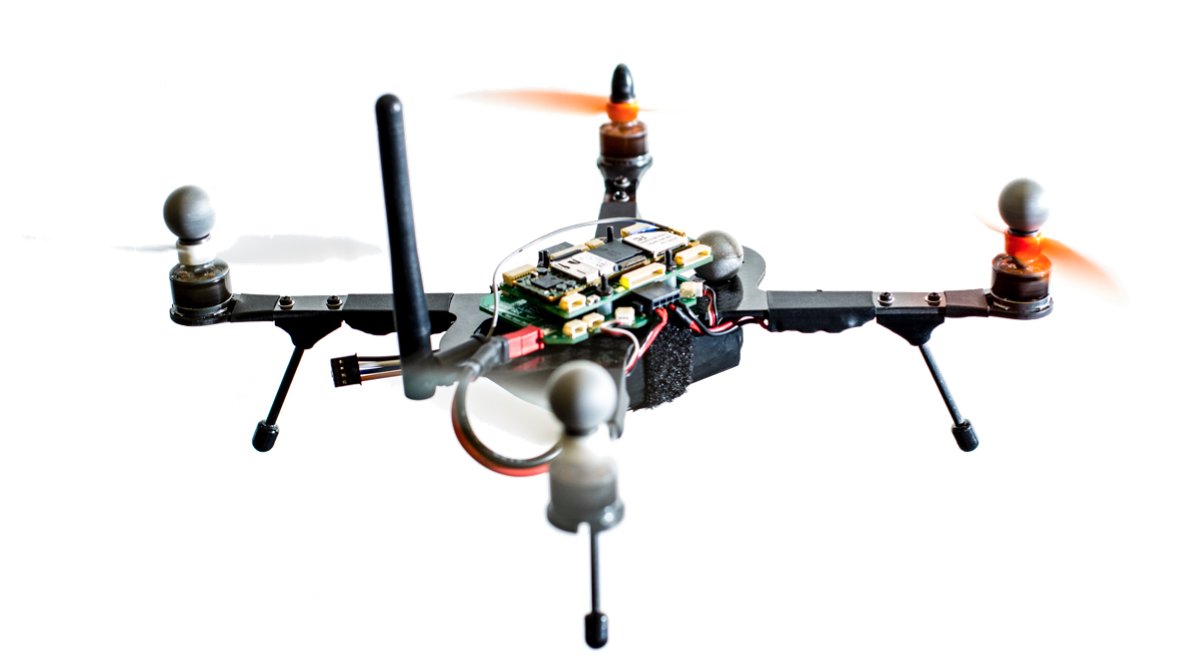}
    \caption{A custom-made quadrotor designed by the Autonomous Control Laboratory.}
    \label{fig: miki quad}
\end{figure}

We consider the optimal control of the custom-made quadrotor designed by the Autonomous Control Laboratory (ACL) at University of Washington, which we referred to as the \emph{ACL quadrotor}. See Fig.~\ref{fig: miki quad} for an illustration and \cite{szmuk2019successive} for a detailed description of the ACL quadrotor. For simplicity, we let all problem parameters in the following (\eg, mass of the quadrotor, maximum thrust magnitude) be unitless.

The dynamics of an ACL quadrotor is given by 
\begin{equation}\label{sys: CT LTI}
    \frac{d}{ds}x(s)=A_cx(s)+B_cu(s)+h_c
\end{equation}
for all time \(s\in\mathbb{R}_+\), where \(x(s)\in\mathbb{R}^{6}\) denotes the position and velocity of the center of mass of the quadrotor at time \(s\), \(u(s)\in\mathbb{R}^{3}\) denotes the thrust vector provided by the propellers of the quadrotor at time \(s\). In addition, 
\begin{equation*}
    A_c = \begin{bmatrix}
    0_{3\times 3} &  I_3\\
    0_{3\times 3} & 0_{3\times 3}
    \end{bmatrix},\, B_c=\frac{1}{0.35}\begin{bmatrix}
    0_{3\times 3}\\
     I_3
    \end{bmatrix}, \, h_c=\begin{bmatrix}
    0_{5\times 1} \\ -9.8
    \end{bmatrix},
\end{equation*}
where \(0.35\) is the total mass of the quadrotor, \(9.8\) is the gravity constant.

We can further simplify dynamics \eqref{sys: CT LTI} by assuming the thrust vector does not change value within each sampling time period of length \(\Delta\in\mathbb{R}_+\). In particular, suppose that
\begin{equation}\label{eqn: zoh}
    u(s)=u(t\Delta), \enskip t\Delta\leq s<(t+1)\Delta
\end{equation}
for all \(t\in\mathbb{N}\). Let
\(x_t\coloneqq x(t\Delta)\) and \(u_t\coloneqq u(t\Delta)\) for all \(t\in\mathbb{N}\). Then dynamics equation \eqref{sys: CT LTI} is equivalent to
\begin{equation}\label{sys: DT LTI}
    x_{t+1}=Ax_t+Bu_t+h
\end{equation}
for all \(t\in\mathbb{N}\), where
\begin{equation}
\begin{aligned}
     &\textstyle A=\exp(A_c\Delta), \enskip B=\left(\int_0^\Delta \exp(A_c s)ds\right)B_c,\\
     & \textstyle h=\left(\int_0^\Delta \exp(A_c s)ds\right)h_c.
\end{aligned}
\end{equation}
Due to the physical limits of its onboard motors, we consider the following constraints on the thrust vector of the quadrotor. The magnitude of the thrust vector is lower bounded by \(\rho_1=2\) and upper bounded by \(\rho_2=5\). Further, the direction of the thrust vector is confined within an icecream cone with half-angle angle \(\theta=\pi/4\). These constraints can be written as \(u_t\in\mathbb{U}\) for all \(t\in\mathbb{N}\), where
\begin{equation}\label{eqn: muffin}
    \mathbb{U}\coloneqq\{u\in\mathbb{R}^{3}|\norm{u}\cos\theta\leq [u]_3,[u]_3\geq \rho_1,\norm{u}\leq \rho_2\}.
\end{equation}
Notice that we approximate the lower bound on \(\norm{u}\) using a linear inequality constraint \([u]_3\geq 2\) so that set \(\mathbb{U}\) remains convex. See Fig.~\ref{fig: muffin} for an illustration of set \(\mathbb{U}\). We also note that the state of system \eqref{sys: DT LTI} can be transferred from any initial value to any final value using a finite-length sequence of inputs, where each input is chosen from set \(\mathbb{U}\). 

\begin{figure}
    \centering
    \begin{tikzpicture}[scale=0.4]
\draw[color=blue, fill=blue!15] (135:2.82) --  (135:5) arc (135:45:5) -- (45:2.82) -- cycle;

\draw[-, color=blue] (135:5) -- (45:5);

\draw[dashed] (135:2.82) -- (0,0) (45:2.82) -- (0, 0) (0, 0) --(2, 0);

\draw[-] (2, 2) --(2.8, 2) (2, 0)--(2.8, 0);
\draw[latex-latex] (2.4, 2) -- (2.4, 0) node[midway,right]{\footnotesize $\rho_1$};

\draw[-]  (135:0.8) arc(135:45:0.8) node[midway,above]{\footnotesize $2\theta_1$};
\draw[rotate around={45:(0,0)}] (-0.8, 5) -- (0, 5) (-0.8, 0) -- (0, 0);
\draw[latex-latex, rotate around={45:(0,0)}] (-0.4, 5) -- (-0.4, 0) node[midway,left]{\footnotesize $\rho_2$};


\end{tikzpicture}
    \caption{An illustration of the set of feasible thrust vectors.}
    \label{fig: muffin}
\end{figure}
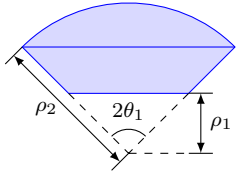

In the following, we will consider optimal control problems with dynamics constraints \eqref{sys: DT LTI} (where we fix \(\Delta=0.2\)) and input constraint set \eqref{eqn: muffin}. We will solve these optimal control problems using Algorithm~\ref{alg: splitting} (where \(\alpha=1\)) and Algorithm~\ref{alg: PIPG} (where \(\alpha\) satisfies Assumption~\ref{asp: step size} with \(\gamma=0.9\)) and compare their performance. We will also use the following notation:
\begin{equation}\label{eqn: traj}
\begin{aligned}
 &u_{[0, \tau-1]}\coloneqq \begin{bmatrix}
 u_0^\top & u_1^\top & \ldots & u_{\tau-1}^\top 
 \end{bmatrix}^\top,\\
& x_{[1, \tau-1]}\coloneqq\begin{bmatrix}
x_1^\top & x_2^\top & \ldots &  x_{\tau-1}^\top
\end{bmatrix}^\top.  
\end{aligned}
\end{equation}

\subsection{Minimum-time landing via quasiconvex optimization}
\label{subsec: quasi cvx}

We consider the problem of landing a quadrotor on a charging station in the minimum amount of time possible subject to various state and input constraints. Let \(x_0\in\mathbb{R}^6\) denote the initial state of the quadrotor, and suppose that the charging station is located at the origin of the position coordinate system. Then the minimum landing time is given by \(i\Delta\), where \(\Delta\) is the sampling time in \eqref{eqn: zoh}, and \(i\) is the smallest integer that makes the following optimization feasible (\ie, there exists one solution that satisfies its constraints):
\begin{equation}\label{opt: landing}
    \begin{array}{ll}
        \underset{u_{[0, \tau-1]}, x_{[1, \tau-1]}}{\mbox{minimize}}  & \frac{1}{2}\sum_{t=0}^{\tau-1} \norm{u_t}^2\\
        \quad\mbox{subject to}  &  x_{t+1}=Ax_t+Bu_t+h,\,0\leq t\leq \tau-1,\\
        &u_t\in\mathbb{U},\, 0\leq t\leq \tau-1,\\
        & x_{t}\in\mathbb{X},\,  1\leq t\leq \tau-1,\\
        & x_t=\mathbf{0}, \, i\leq t\leq \tau-1, 
    \end{array}
\end{equation}
where \(\tau\in\mathbb{N}\) is large enough such that optimization \eqref{opt: landing} is feasible if \(i=\tau-1\), set \(\mathbb{U}\) is given by \eqref{eqn: muffin}, and set \(\mathbb{X}\) is given by
\begin{equation*}
         \mathbb{X} =\{r\in\mathbb{R}^3| \norm{r}\cos\beta\leq [r]_3\} \times \{r\in\mathbb{R}^3|\norm{r}\leq \eta\},
\end{equation*}
where \(\beta=\pi/4\), \(\eta=5\). The quadratic objective function in \eqref{opt: landing} penalizes large-magnitude inputs. The constraint \(x_t\in\mathbb{X}\) upper bounds the speed of the quadrotor and limits the directions from which the quadrotor approach the charging station. The latter is also known as the \emph{approach cone} constraint, which is widely used in spacecraft landing \cite{malyuta2021advances}. See Fig.~\ref{fig: landing} for an illustration. 

\begin{figure}[!ht]
	\centering
	\begin{tikzpicture}[scale=0.7]
		\coordinate (O) at (0, 0);
		\coordinate (quad) at (-1.8, 4);
		\coordinate (mid1) at (-1.7, 2.8);
		\coordinate (mid2) at (0.6, 1.5);
		
		\fill[blue!10] (0,0) -- (55:5.5) -- (125:5.5) -- cycle;
		
		\node [label={[align=center, blue]\scriptsize approach\\ \scriptsize cone}] at (3, 2) {};
		
		
		\node [quadcopter side,fill=white,draw=black,minimum width=1cm,rotate=-15,scale=1] at (quad) {}; 
		
		\draw[dashed,-latex] (quad) to[out=-85,in=105] (mid1); 
		\draw[dashed,-latex] (mid1) to[out=-85,in=170] (mid2);
		\draw[dashed] (mid2) to[out=-60,in=90] (O);
		
		\fill (O) circle [radius=1.5pt];

		\draw (-1, 0) rectangle +(2, -0.2);
		\pattern[pattern=crosshatch] (-1, 0) rectangle +(2, -0.2);
		\node[label=below:{\scriptsize charging station }] at (0, -0.2) {};
	\end{tikzpicture}
		\caption{An illustration of the quadrotor landing.}
		\label{fig: landing}
\end{figure}
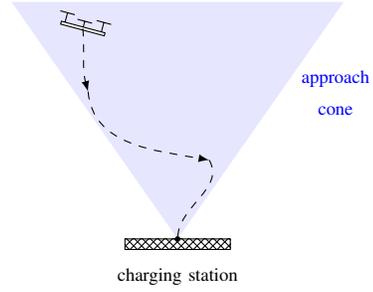

One can compute the smallest integer \(i\) that makes optimization \eqref{opt: landing} feasible by solving a quasiconvex optimization problem. To this end, we say \((u_{[0, \tau-1]}, x_{[1, \tau-1]})\) is feasible for \eqref{opt: landing} if \((u_{[0, \tau-1]}, x_{[1, \tau-1]})\) satisfy the constraints in optimization \eqref{opt: landing}. We also define function \(f\) using \eqref{eqn: quasiconvex f} (see the bottom of the next page). 
\begin{figure*}[b]
\rule[1ex]{\textwidth}{0.1pt}
\begin{equation}\label{eqn: quasiconvex f}
f(u_{[0, \tau-1]}, x_{[1, \tau-1]})\coloneqq \min\left\{i\in\mathbb{N}\cup\{\infty\}\left|\text{\((u_{[0, \tau-1]}, x_{[1, \tau-1]})\) is feasible for \eqref{opt: landing}}
\right.\right\}, \enskip \text{where } \min \emptyset \coloneqq \infty.
\end{equation}
\end{figure*}
Then the smallest integer \(i\) that makes optimization \eqref{opt: landing} feasible is the optimal value of the following optimization:
\begin{equation}\label{opt: quasiconvex}
    \begin{array}{ll}
        \underset{u_{[0, \tau-1]}, x_{[1, \tau-1]}}{\mbox{minimize}} & f(u_{[0, \tau-1]}, x_{[1, \tau-1]}).
    \end{array}
\end{equation}

We will show that optimization \eqref{opt: quasiconvex} is a quasiconvex optimization problem and can be solved using bisection method and infeasibility detection as follows. First, given any \(\epsilon\in\mathbb{R}_+\), one can verify that 
\begin{equation}\label{eqn: sublevel}
    \begin{aligned}
     &\{(u_{[0, \tau-1]}, x_{[1, \tau-1]})|f(u_{[0, \tau-1]}, x_{[1, \tau-1]})\leq \epsilon\}\\
     &=\left\{(u_{[0, \tau-1]},
      x_{[1, \tau-1]})
      \left|\begin{aligned}
 &\text{\((u_{[0, \tau-1]}, x_{[1, \tau-1]})\) is}\\
 & \text{feasible for \eqref{opt: landing} with \(i=\lfloor \epsilon\rfloor\)}
\end{aligned}\right.\right\},
\end{aligned}
\end{equation}
where \(\lfloor \epsilon\rfloor\) is the largest integer less or equals to \(\epsilon\). Since the constraints in \eqref{opt: landing} are all convex with respect to \(u_{[0, \tau-1]}\) and \(x_{[1, \tau-1]}\), we conclude that set \eqref{eqn: sublevel} is convex. In other words, all sublevel sets of function \(f\) are convex sets. Therefore, function \(f\) is quasiconvex and one can solve optimization \eqref{opt: quasiconvex} by checking whether the set in \eqref{eqn: sublevel} is empty, \ie, whether optimization \eqref{opt: landing} is feasible when \(i=\lfloor\epsilon\rfloor\), for different choices of \(\epsilon\in\mathbb{R}_{+}\) \cite[Sec. 4.2.5]{boyd2004convex}. 

In the following, we consider optimization \eqref{opt: quasiconvex} where \(\tau=40\), \(i=\{24, 26\}\), and \(x_0=\begin{bmatrix}
6 & 6 & 15 & 2 & 2 & 2
\end{bmatrix}^\top\).

To solve the above optimization, we apply Algorithm~\ref{alg: splitting} and Algorithm~\ref{alg: PIPG} to the corresponding optimization \eqref{opt: landing}. In particular, Appendix~\ref{app: transform} gives the transformation from optimization~\eqref{opt: landing} to optimization \eqref{opt: conic}. Fig.~\ref{fig: landing exp} illustrates the convergence of \(\norm{w^j-w^{j-1}}\) and \(\norm{z^j-z^{j-1}}\) computed in Algorithm~\ref{alg: splitting} and Algorithm~\ref{alg: PIPG}, where the two algorithms show similar convergence. These convergence results show that both \(\norm{w^j-w^{j-1}}\) and \(\norm{z^j-z^{j-1}}\) converge to zero if \(i=26\), and \(\norm{w^j-w^{j-1}}\) does not converge to zero if \(i=24\). As a result of Theorem~\ref{thm: infeasibility}, we conclude that the optimal value of optimization \eqref{opt: quasiconvex} is between \(24\) and \(26\) (solving another instance of optimization \eqref{opt: landing} will confirm that it is \(25\)).

\begin{figure}[!ht]
    \centering
    \begin{subfigure}[b]{0.49\columnwidth}
    \centering 
    \includegraphics[width=\textwidth]{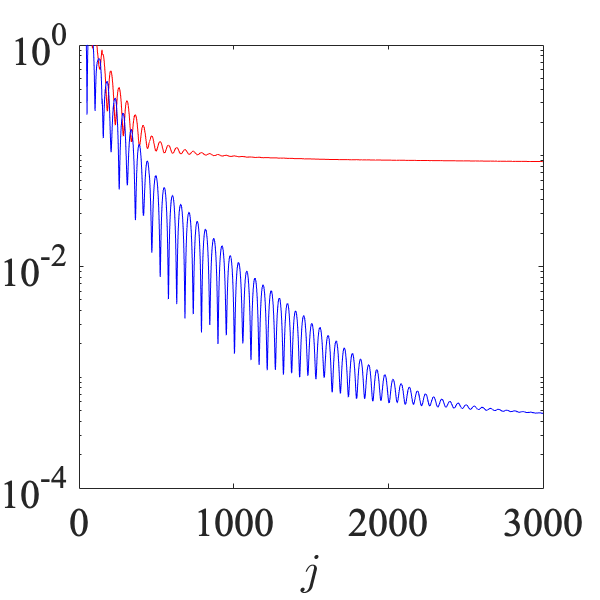}
    \caption{\(i=24\), Algorithm~\ref{alg: splitting}}
    \end{subfigure}
    \begin{subfigure}[b]{0.49\columnwidth}
    \centering 
    \includegraphics[width=\textwidth]{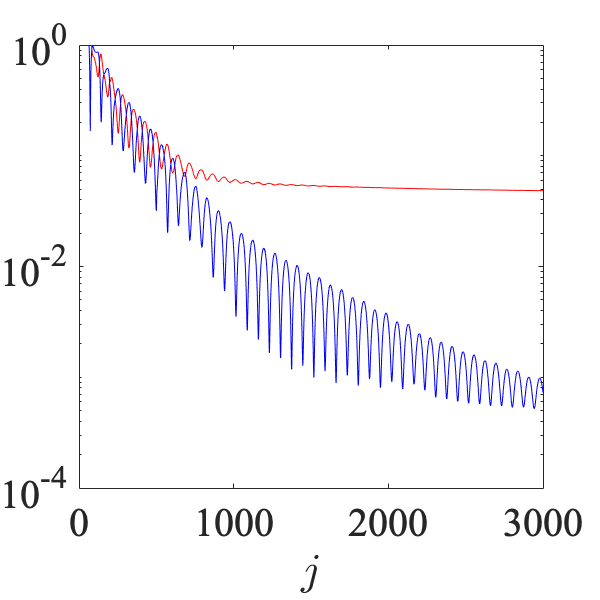}
    \caption{\(i=24\), Algorithm~\ref{alg: PIPG}}
    \end{subfigure}
    \centering
    \begin{subfigure}[b]{0.49\columnwidth}
    \centering 
    \includegraphics[width=\textwidth]{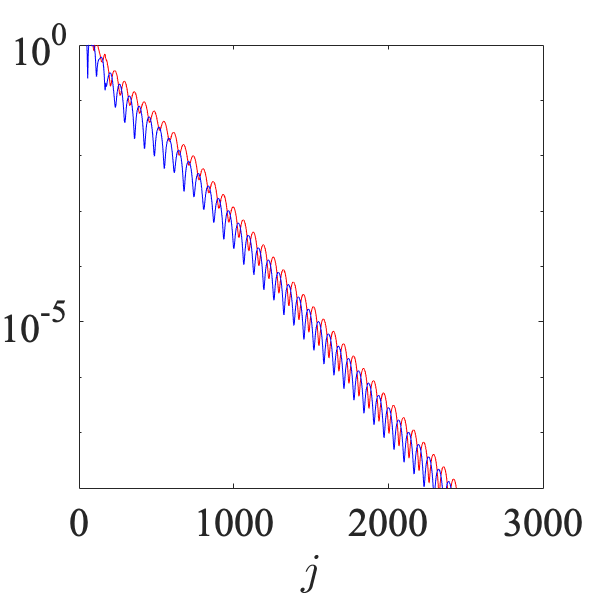}
    \caption{\(i=26\), Algorithm~\ref{alg: splitting}}
    \end{subfigure}
    \centering
    \begin{subfigure}[b]{0.49\columnwidth}
    \centering 
    \includegraphics[width=\textwidth]{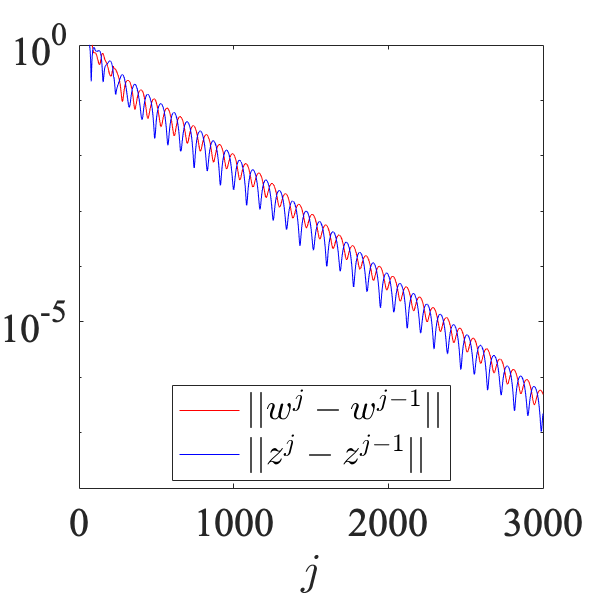}
    \caption{\(i=26\), Algorithm~\ref{alg: PIPG}}
    \end{subfigure}
    \caption{Convergence of \(\norm{w^j-w^{j-1}}\) and \(\norm{z^j-z^{j-1}}\) in Algorithm~\ref{alg: splitting} and Algorithm~\ref{alg: PIPG} when applied to optimization \eqref{opt: landing} with different values of \(i\). } 
    \label{fig: landing exp}
\end{figure}

We note that the quasiconvex optimization approach presented in this section has also been used for the minimum time control of overhead cranes \cite{zhang2014minimum}.

\subsection{Path-planning via mixed-integer optimization}
\label{subsec: mixed}

We consider the problem of flying a quadrotor from a initial position to a final position, both located in a L-shaped corridor. See Fig.~\ref{fig: corridor} for an illustration. To avoid collision with the boundary of the corridor, we consider the following state constraints
\begin{equation}
    \underline{r}^1\leq Tx_t\leq \overline{r}^1\text{ or }\underline{r}^2\leq Tx_t\leq \overline{r}^2,
\end{equation}
for all \(t\), where \(T=\begin{bmatrix}
I_3 & 0_{3\times 3}
\end{bmatrix}\), 
\begin{equation*}
\begin{aligned}
 &\underline{r}^1=\begin{bmatrix}
    0 & -2 & 0
    \end{bmatrix}^\top,\enskip \overline{r}^1=\begin{bmatrix}
    2 & 9 & 3
    \end{bmatrix}^\top,\\
    &\underline{r}^2=\begin{bmatrix}
    2 & -2 & 0
    \end{bmatrix}^\top,\enskip \overline{r}^1=\begin{bmatrix}
    12 & 0 & 3
    \end{bmatrix}^\top.
\end{aligned}
\end{equation*}
Given the initial and final state of the quadrotor (denoted by \(x_0\in\mathbb{R}^6\) and \(x_\tau\in\mathbb{R}^6\), respectively), the above problem can be formulated as follows:
\begin{equation}\label{opt: mixed}
    \begin{array}{ll}
        \underset{\substack{u_{[0, \tau-1]}, x_{[1, \tau-1]}\\ b_{[1, \tau-1]}} }{\mbox{minimize}}  & \frac{1}{2}\sum_{t=0}^{\tau-1} \norm{u_t}^2\\
        \quad\mbox{subject to} &  x_{t+1}=Ax_t+Bu_t+h,\, 0\leq t\leq \tau-1,\\
        & u_t\in\mathbb{U},\enskip 0\leq t\leq \tau-1,\\
        & x_{t}\in\mathbb{X},\enskip 1\leq t\leq \tau-1,\\
        & \underline{r}^1+b_t(\underline{r}^2-\underline{r}^1)\leq Tx_t,1\leq t\leq \tau-1,\\
        & \overline{r}^1+b_t(\overline{r}^2-\overline{r}^1)\geq Tx_t,1\leq t\leq \tau-1,\\
        & b_t\in\{0, 1\}, \, 1\leq t\leq \tau-1,
    \end{array}
\end{equation}
where the notation \(b_{[1, \tau-1]}\) is similar to those in \eqref{eqn: traj}, \(\mathbb{U}\) is given by \eqref{eqn: muffin}, and \(\mathbb{X} =\mathbb{R}^3\times \{r\in\mathbb{R}^3|\norm{r}\leq \eta\}\) with \(\eta=5\). The quadratic objective function in \eqref{opt: mixed} penalizes large-magnitude inputs. The constraint \(x_t\in\mathbb{X}\) upper bounds the speed of the quadrotor. We assume \(\tau\) is large enough such that optimization \eqref{opt: mixed} has an optimal solution.

\begin{figure}[!ht]
	\centering
	\begin{tikzpicture}[scale=0.6]
		
		\coordinate (O1) at (0, 0);
		\coordinate (O2) at (0, -3);
		\coordinate (O3) at (9, -3);
		\coordinate (O4) at (9, 0);
		\coordinate (O5) at (3, 0);
		\coordinate (O6) at (3, 3);
		\coordinate (O7) at (0, 3);
		\coordinate (O8) at (3, -3);
		\coordinate (x0) at (1.5, 2.5);
		\coordinate (xf) at (8.5, -1.5);
		\coordinate (quad) at (1.5, -2);
		\coordinate (mid1) at (2, 0.5);
		\coordinate (mid2) at (5, -1);
		
		\fill[red!10] (O2) -- (O8) -- (O6) --(O7) --cycle;
		\node[label={[red, rotate=-90]below:\scriptsize corridor 1}] at (1, 0.5) {};

		\fill[blue!10] (O8) -- (O3) -- (O4) --(O5) --cycle;
		\node[label={[blue]below:\scriptsize corridor 2}] at (6, -2) {};
		
		\draw (O7) -- (O2) -- (O3);
		\draw (O4) -- (O5) -- (O6);
		
		\pattern[pattern=north east lines] ($(O7)+(-0.2, 0)$) rectangle ($(O2)+(0, -0.2)$);
		
		\pattern[pattern=north east lines] ($(O2)+(0, -0.2)$) rectangle (O3);

		\pattern[pattern=north east lines] ($(O4)+(0, 0.2)$) rectangle ($(O5)+(0, 0)$);
		
		\pattern[pattern=north east lines] ($(O5)+(0.2, 0.2)$) rectangle (O6);

		\fill (x0) circle [radius=2pt];
		\node[label=left:{\footnotesize $x_0$}] at (x0) {};
		
 		\fill (xf) circle [radius=2pt];
 		\node[label=below:{\footnotesize $x_{\tau}$}] at (xf) {};
		
		\node [quadcopter top,fill=white,draw=black,minimum width=1cm,rotate=10,scale=0.5] at (quad) {}; 
		
		\draw[dashed, -latex] (x0) to[out=-85,in=90] (mid1);
		
		\draw[dashed, -latex] (mid1) to[out=-90,in=90] (quad) to[out=-80,in=180] (mid2);
		
		\draw[dashed] (mid2) to[out=0,in=180] (xf);

       	\end{tikzpicture}
		\caption{An illustration of path-plannning in an L-shaped corridor.}
		\label{fig: corridor}
\end{figure}
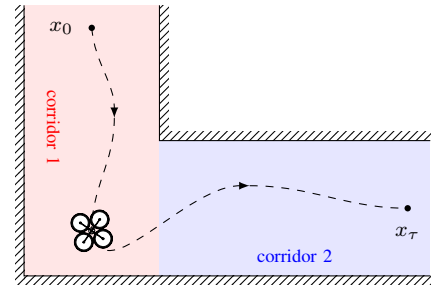

Optimization \eqref{opt: mixed} is challenging to solve if the number of binary variables is large. For example, if \(\tau=22\), then optimization \eqref{opt: mixed} contains \(21\) binary variables, and solving it naively requires considering \(2^{21}\) (more than 2 millions) different values of vector \(b_{[0, \tau-1]}\), each value corresponds to a convex optimization problem.

One can reduce the number of binary variables in optimization \eqref{opt: mixed} using infeasibility detection as follows. Let \(\hat{b}\in\{0, 1\}\) and \((u^\star_{[0, \tau-1]}, x^\star_{[1, \tau-1]}, b^\star_{[1, \tau-1]})\) be an optimal solution of optimization \eqref{opt: mixed}. If \(b^\star_1=\hat{b}\), then \((u^\star_{[0, \tau-1]}, x^\star_{[1, \tau-1]}, b^\star_{[1, \tau-1]})\) satisfies the constraints of the following optimization:
\begin{equation}\label{opt: BnB}
    \begin{array}{ll}
        \underset{\substack{u_{[0, \tau-1]}, x_{[1, \tau-1]}\\ b_{[1, \tau-1]}} }{\mbox{minimize}}  & \frac{1}{2}\sum_{t=0}^\tau \norm{u_t}^2\\
        \quad\mbox{subject to} &  x_{t+1}=Ax_t+Bu_t+h,\, 0\leq t\leq \tau-1,\\
        & u_t\in\mathbb{U},\enskip 0\leq t\leq \tau-1,\\
        & x_{t}\in\mathbb{X},\enskip 1\leq t\leq \tau-1,\\
        & \underline{r}^1+b_t(\underline{r}^2-\underline{r}^1)\leq Tx_t,1\leq t\leq \tau-1,\\
        & \overline{r}^1+b_t(\overline{r}^2-\overline{r}^1)\geq Tx_t,1\leq t\leq \tau-1,\\
        & b_1=\hat{b},\,b_t\in[0, 1], \, 1\leq t\leq \tau-1.
    \end{array}
\end{equation}
Compared with \eqref{opt: mixed}, here the constraints on \(b_{[1, \tau-1]}\) are continuous instead of discrete, and the value of \(b_1\) is fixed instead of being a variable.

If optimization \eqref{opt: BnB} is infeasible (\ie, no solution satisfies its constraints), then \((u^\star_{[0, \tau-1]}, x^\star_{[1, \tau-1]}, b^\star_{[1, \tau-1]})\) does not satisfy the constraints in \eqref{opt: BnB}. Consequently, one must have \(b^\star_1\neq\hat{b}\). In other words, we can eliminate the binary variable \(b_1\) in optimization \eqref{opt: mixed} by fixing its value to be \(1-\hat{b}\).

In the following, we consider optimization \eqref{opt: mixed} where \(\tau=22\) and
\begin{equation*}
    \begin{aligned}
     &x_0=\begin{bmatrix}
1 & 9 & 2.5 & 0 & 0 & 0
\end{bmatrix}^\top,\\
&x_\tau=\begin{bmatrix}
12 & -1 & 0.5 & 0 & 0 & 0
\end{bmatrix}^\top.
    \end{aligned}
\end{equation*}.

To eliminate binary variable \(b_1\) in the above optimization, we apply Algorithm~\ref{alg: splitting} and Algorithm~\ref{alg: PIPG} to the corresponding optimization \eqref{opt: BnB}. Particularly, Appendix~\ref{app: transform} gives the transformation from optimization~\eqref{opt: BnB} to optimization \eqref{opt: conic}.
Fig.~\ref{fig: mixed exp} illustrates the convergence of \(\norm{w^j-w^{j-1}}\) and \(\norm{z^j-z^{j-1}}\) computed in Algorithm~\ref{alg: splitting}  and Algorithm~\ref{alg: PIPG}, where the two algorithms show similar convergence. These convergence results show that both \(\norm{w^j-w^{j-1}}\) and \(\norm{z^j-z^{j-1}}\) converge to zero if \(\hat{b}=0\), and \(\norm{w^j-w^{j-1}}\) does not converge to zero if \(\hat{b}=1\). As a result of Theorem~\ref{thm: infeasibility}, we conclude that optimization \eqref{opt: BnB} is infeasible if \(\hat{b}=1\) and feasible if \(\hat{b}=0\). Therefore we can eliminate the binary variable \(b_1\) in optimization \eqref{opt: mixed} by fixing its value to be \(0\).

By repeating a similar process, we can further reduce the number of binary variables, and conclude that \(b_i=0\) for all \(i=1, 2, \ldots, 7\) and \(b_i=1\) for all \(i=12, 13, \ldots, 21\) in optimization \eqref{opt: mixed}. In other words, by solving at most \(42\) convex optimization problems, each similar to optimization \eqref{opt: BnB}, we can reduce the number of possible values of binary variables \(b_{[1, 21]}\) from \(2^{21}\) (more than 2 millions) to merely \(2^4\) (less than 20)!

\begin{figure}[!ht]
    \centering
    \begin{subfigure}[b]{0.49\columnwidth}
    \centering 
    \includegraphics[width=\textwidth]{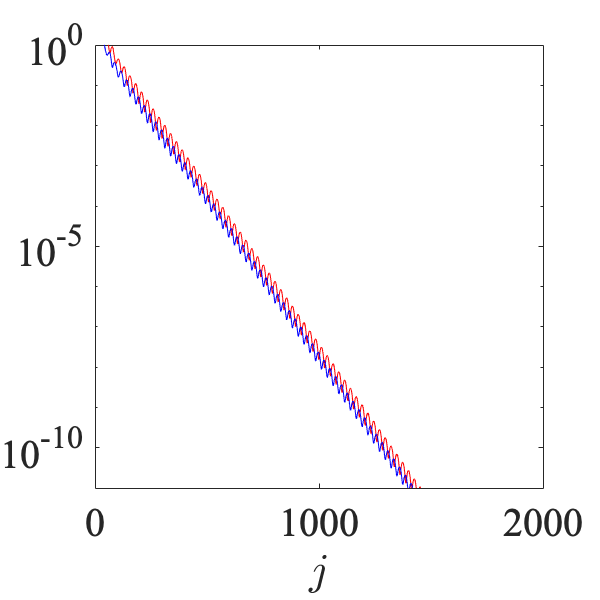}
    \caption{\(\hat{b}=0\), Algorithm~\ref{alg: splitting} }
    \end{subfigure}
    \begin{subfigure}[b]{0.49\columnwidth}
    \centering 
    \includegraphics[width=\textwidth]{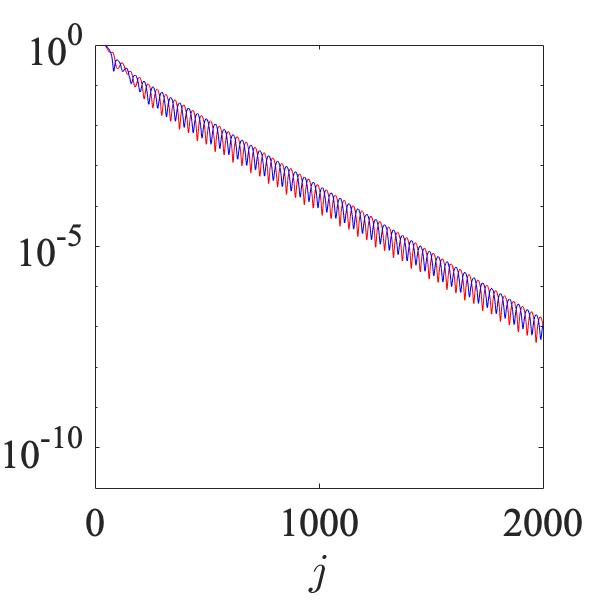}
    \caption{\(\hat{b}=0\), Algorithm~\ref{alg: PIPG} }
    \end{subfigure}
    \begin{subfigure}[b]{0.49\columnwidth}
    \centering 
    \includegraphics[width=\textwidth]{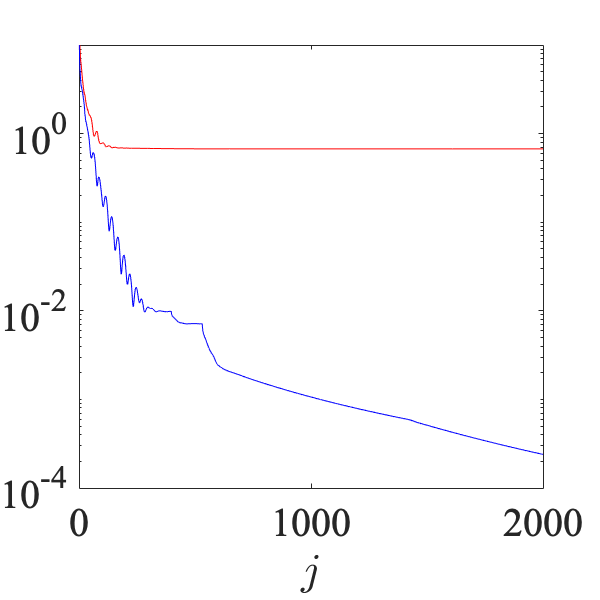}
    \caption{\(\hat{b}=1\), Algorithm~\ref{alg: splitting}}
    \end{subfigure}
    \begin{subfigure}[b]{0.49\columnwidth}
    \centering 
    \includegraphics[width=\textwidth]{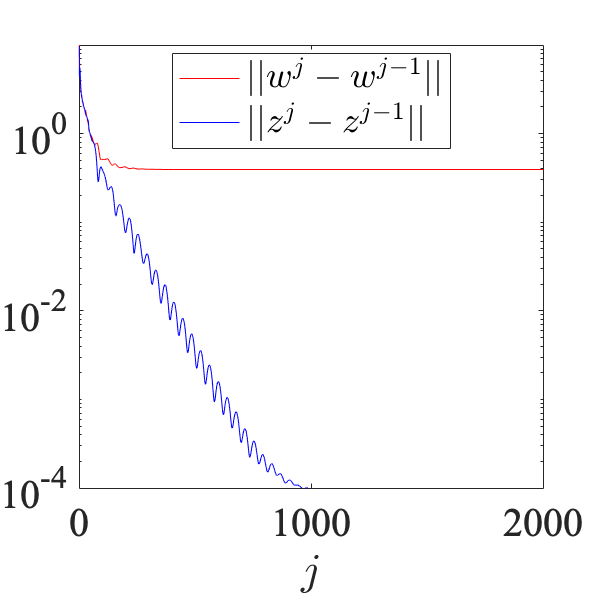}
    \caption{\(\hat{b}=1\), Algorithm~\ref{alg: PIPG}}
    \end{subfigure}
    \caption{Convergence of \(\norm{w^j-w^{j-1}}\) and \(\norm{z^j-z^{j-1}}\) in Algorithm~\ref{alg: splitting} and Algorithm~\ref{alg: PIPG} when applied to optimization \eqref{opt: BnB} for different values of \(\hat{b}\).} 
    \label{fig: mixed exp}
\end{figure}


\section{Conclusions}
\label{sec: conclusions}
We introduce a novel method, named proportional-integral projected gradient method (PIPG), for infeasibility detection in conic optimization. The iterates of PIPG either asymptotically provide a proof of primal or dual infeasibility, or asymptotically satisfy a set of primal-dual optimality conditions. We demonstrate the application of PIPG in quasiconvex and mixed integer optimization. 

There are several questions about PIPG that still remain open. For example, does PIPG allow larger step sizes like the primal-dual method in \cite{li2021new}? If the projections in PIPG are computed approximatedly rather than exactly, how will the results in Theorem~\ref{thm: infeasibility} change? We aim to answer these questions in our future work.



\appendices

\section{Proof of Proposition~\ref{prop: PIPG avg op}}
\label{app: lemA}
We will use the following result on projection.

\begin{lemma}\cite[Thm. 3.16]{bauschke2017convex}\label{lem: projection}
Let \(\mathbb{D}\subset\mathbb{R}^n\) be a nonempty closed convex set. Then \(y=\pi_{\mathbb{D}}[z]\) if and only if \(y\in\mathbb{D}\) and 
\(\langle y-z, z'-y\rangle\geq 0\) for any \(z'\in\mathbb{D}\).
\end{lemma}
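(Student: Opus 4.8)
The plan is to prove both directions of the equivalence directly from the definition $\pi_{\mathbb{D}}[z]=\argmin_{z'\in\mathbb{D}}\norm{z-z'}$, working with the squared distance $\norm{z-z'}^2$ to keep everything smooth and quadratic. The only tools needed are convexity of $\mathbb{D}$ (to produce admissible competitor points) and the expansion of the squared norm.

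For the forward direction, I would assume $y=\pi_{\mathbb{D}}[z]$, so certainly $y\in\mathbb{D}$. Fix an arbitrary $z'\in\mathbb{D}$ and, using convexity, form the segment point $y_\alpha\coloneqq y+\alpha(z'-y)\in\mathbb{D}$ for $\alpha\in[0,1]$. Optimality of $y$ gives $\norm{z-y}^2\leq\norm{z-y_\alpha}^2$, and expanding the right-hand side yields
\[
0\leq -2\alpha\langle z-y,z'-y\rangle+\alpha^2\norm{z'-y}^2 .
\]
Dividing by $\alpha>0$ and letting $\alpha\to 0^+$ kills the quadratic term and leaves $\langle z-y,z'-y\rangle\leq 0$, i.e.\ $\langle y-z,z'-y\rangle\geq 0$, as required. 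Since $z'\in\mathbb{D}$ was arbitrary, the variational inequality holds.

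For the reverse direction, I would assume $y\in\mathbb{D}$ satisfies the inequality and show $y$ minimizes the distance. For any $z'\in\mathbb{D}$, inserting $\pm y$ and expanding gives
\[
\norm{z-z'}^2=\norm{z-y}^2+2\langle z-y,y-z'\rangle+\norm{y-z'}^2 .
\]
The cross term equals $2\langle y-z,z'-y\rangle\geq 0$ by hypothesis, so $\norm{z-z'}^2\geq\norm{z-y}^2+\norm{y-z'}^2\geq\norm{z-y}^2$. Hence $y$ attains the minimal distance, and uniqueness of the projection (from strict convexity of $\norm{z-\cdot}^2$ over the closed convex set $\mathbb{D}$) forces $y=\pi_{\mathbb{D}}[z]$.

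There is no serious obstacle here since the result is the classical obtuse-angle characterization of metric projection; the only point to handle cleanly is the one-sided limit $\alpha\to 0^+$ in the forward direction (ensuring we divide by a strictly positive $\alpha$ before taking the limit), and noting in the reverse direction that the displayed inequality already proves minimality without needing the extra $\norm{y-z'}^2$ term, which merely reconfirms uniqueness.
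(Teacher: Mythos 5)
Your proof is correct and is the classical argument: the forward direction via the convex-combination competitor \(y+\alpha(z'-y)\) and the limit \(\alpha\to 0^{+}\), and the reverse direction via the squared-norm expansion, with the extra term \(\norm{y-z'}^2\) even giving uniqueness. The paper itself offers no proof of this lemma---it is quoted verbatim from \cite[Thm.~3.16]{bauschke2017convex}---and your argument is essentially the standard one given in that reference, so there is nothing to reconcile.
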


\begin{proof}
Let \(z_j\in\mathbb{D}\), \(v_j\in\mathbb{R}^m\) and
\begin{subequations}\label{eqn: two pts}
\begin{align}
    w^+_j\coloneqq&\pi_{\mathbb{K}^\circ}[v_j+\alpha(Hz_j-g)],\label{eqn: w proj}\\
    z^+_j\coloneqq&\pi_{\mathbb{D}}[z_j-\alpha( Pz_j+q+ H^\top w^+_j)],\label{eqn: z proj}\\
   v^+_j\coloneqq&w_j^++\alpha H(z^+_j-z_j),\label{eqn: v int}
\end{align}
\end{subequations}
for \(j=1, 2\). Inorder to show that line~\ref{alg: w}-\ref{alg: v} in Algorithm~\ref{alg: PIPG} is the fixed point iteration of a \(\gamma\)-averaged operator, it suffices to show the following inequality
\begin{equation}\label{eqn: zw avg op}
\begin{aligned}
      & \textstyle \norm{z_2^+-z_1^+}^2+\frac{1-\gamma}{\gamma}\norm{z_1-z_1^+-z_2+z_2^+}^2\\
      &\textstyle +\norm{v_2^+-v_1^+}^2+\frac{1-\gamma}{\gamma}\norm{v_1-v_1^+-v_2+v_2^+}^2\\
      &\leq \norm{z_2-z_1}^2+\norm{v_2-v_1}^2
\end{aligned}
\end{equation}
To this end, first we apply Lemma~\ref{lem: projection} to \eqref{eqn: w proj} and \eqref{eqn: z proj} for both \(j=1\) and \(j=2\) and obtain the following inequalities:
\begin{subequations}
    \begin{align}
        0\leq & \langle w_1^+-v_1-\alpha (Hz_1-g), w_2^+-w_1^+\rangle,\label{eqn: w1 normal}\\
        0\leq & \langle w_2^+-v_2-\alpha (Hz_2-g), w_1^+-w_2^+\rangle,\label{eqn: w2 normal}\\
        0\leq &\langle z_1^+-z_1+\alpha(Pz_1+q+ H^\top w_1^+), z_2^+-z_1^+\rangle,\label{eqn: z1 normal}\\
        0\leq &\langle z_2^+-z_2+\alpha (Pz_2+q+H^\top w_2^+), z_1^+-z_2^+\rangle.\label{eqn: z2 normal}
    \end{align}
\end{subequations}
Summing up the right hand side of \eqref{eqn: w1 normal}, \eqref{eqn: w2 normal}, \eqref{eqn: z1 normal}, and \eqref{eqn: z2 normal} gives the following
\begin{equation}\label{eqn: sum proj}
    \begin{aligned}
         &0\leq \langle z_1^+-z_2^+, z_1-z_1^+-z_2+z_2^+\rangle\\
         &+ \alpha\langle P(z_2-z_1), z_1^+-z_2^+ \rangle\\
    &+\langle w_1^+-w_2^+, w_2^+-v_2-w_1^++v^1\rangle\\
    &+\alpha\langle w_1^+-w_2^+, H(z_1-z_2-z_1^++z_2^+)\rangle.
    \end{aligned}
\end{equation}
Second, by using \eqref{eqn: v int} for \(j=1\) and \(j=2\) we can show the following two identities
\begin{equation}\label{eqn: wv eqn1}
    \begin{aligned}
         &\alpha\langle w_1^+-w_2^+, H(z_1-z_2-z_1^++z_2^+)\rangle\\
         &=\langle w_1^+-w_2^+, w_1^+-v_1^++v_2^+-w_2^+\rangle,
    \end{aligned}
\end{equation}
\begin{equation}\label{eqn: wv eqn2}
    \begin{aligned}
         &\langle w_1^+-w_2^+, v_1-v_1^+-v_2+v_2^+\rangle\\
         &=\langle v_1^+-v_2^+, v_1-v_1^+-v_2+v_2^+\rangle\\
         &+\alpha\langle H(z_1-z_1^+-z_2+z_2^+), v_1-v_1^+-v_2+v_2^+\rangle.
    \end{aligned}
\end{equation}
By summing up both sides of \eqref{eqn: sum proj}, \eqref{eqn: wv eqn1}, and \eqref{eqn: wv eqn2} we obtain the following
\begin{equation}\label{eqn: sum 2}
    \begin{aligned}
         &0\leq \langle z_1^+-z_2^+, z_1-z_1^+-z_2+z_2^+\rangle\\
         &+\langle v_1^+-v_2^+, v_1-v_1^+-v_2+v_2^+\rangle\\
         &+\alpha\langle P(z_2-z_1), z_1^+-z_2^+ \rangle\\
    &+\alpha\langle H(z_1-z_1^+-z_2+z_2^+), v_1-v_1^+-v_2+v_2^+\rangle.
    \end{aligned}
\end{equation}
Our next step is to further simplify the inner product terms in \eqref{eqn: sum 2} as follows. First, by completing the square, we can show the following inequality
\begin{equation}\label{eqn: zvH}
    \begin{aligned}
    &2\langle H(z_1-z_1^+-z_2+z_2^+), v_1-v_1^+-v_2+v_2^+\rangle\\
    &\leq \textstyle \frac{\gamma\alpha}{2\gamma -1}\norm{H(z_1-z_1^+-z_2+z_2^+)}^2\\
    &\textstyle+\frac{2\gamma-1}{\gamma\alpha} \norm{v_1-v_1^+-v_2+v_2^+}^2.
    \end{aligned}
\end{equation}
Second, by completing the square we can also show the following identities
\begin{equation}\label{eqn: z square}
\begin{aligned}
    &\norm{z_1-z_1^+-z_2+z_2^+}^2+\norm{z_1^+-z_2^+}^2-\norm{z_1-z_2}^2\\
    &=-2\langle z_1^+-z_2^+, z_1-z_1^+-z_2+z_2^+\rangle,
\end{aligned}
\end{equation}
\begin{equation}\label{eqn: v square}
\begin{aligned}
 &\norm{v_1-v_1^+-v_2+v_2^+}^2+\norm{v_1^+-v_2^+}^2-\norm{v_1-v_2}^2\\
 &=-2\langle v_1^+-v_2^+, v_1-v_1^+-v_2+v_2^+\rangle.
\end{aligned}
\end{equation}
Third, since \(\nu\geq \mnorm{H}\), we have
\begin{equation}\label{eqn: H sigma}
    \begin{aligned}
     \norm{H(z_1-z_1^+-z_2+z_2^+)}^2\leq \nu^2\norm{z_1-z_1^+-z_2+z_2^+}^2.
    \end{aligned}
\end{equation}
If \(\lambda=0\geq \mnorm{P}\), then we necessarily have \(P=0\). In this case, by multiplying both sides of \eqref{eqn: sum 2} by \(2\), both sides of \eqref{eqn: zvH} by \(\alpha\), both sides of \eqref{eqn: H sigma} by \(\frac{\gamma\alpha^2}{2\gamma-1}\), then summing them up together with the both sides of \eqref{eqn: z square}, \eqref{eqn: v square}, we obtain \eqref{eqn: zw avg op} if \(\frac{\gamma\alpha^2\nu^2}{2\gamma-1}\leq \frac{2\gamma-1}{\gamma}\). 

If \(\lambda>0\), we need the following two additional inequalities. First, since \(\lambda\geq \mnorm{P}\) one can verify the following
\begin{equation}\label{eqn: f smooth}
 -\langle P(z_1-z_2), z_1-z_2\rangle\leq \textstyle-\frac{1}{\lambda}\norm{P(z_1-z_2)}^2.
\end{equation}
Second, by completing the square we can show the following
\begin{equation}\label{eqn: f square}
    \begin{aligned}
    &2\langle P(z_2-z_1), z_1^+-z_1-z_2^++z_2 \rangle\\
    & \leq \textstyle\frac{2}{\lambda} \norm{P(z_2- z_1)}^2+\frac{\lambda}{2} \norm{z_1^+-z_1-z_2^++z_2}^2.
    \end{aligned}
\end{equation}
Finally, by multiplying both sides of \eqref{eqn: sum 2} by \(2\), both sides of \eqref{eqn: zvH} by \(\alpha\), both sides of \eqref{eqn: H sigma} by \(\frac{\gamma\alpha^2}{2\gamma-1}\), both sides of \eqref{eqn: f smooth} by \(2\alpha\), both sides of \eqref{eqn: f square} by \(\alpha\), then summing them up together with both sides of \eqref{eqn: z square} and \eqref{eqn: v square}, we obtain \eqref{eqn: zw avg op} if
\(\frac{\gamma\alpha^2\nu^2}{2\gamma-1}+\frac{\alpha\lambda}{2}\leq \frac{2\gamma-1}{\gamma}\). 

Combining the above two case, we conclude that \eqref{eqn: zw avg op} holds for all \(\lambda\in\mathbb{R}_+\) if
\[\textstyle \frac{\gamma\alpha^2\nu^2}{2\gamma-1}+\frac{\alpha\lambda}{2}\leq \frac{2\gamma-1}{\gamma}.\]
One can verify that the above inequality holds under Assumption~\ref{asp: step size}, which completes the proof.
\end{proof}

\section{Proof of Lemma~\ref{lem: dist func}}
\label{app: dist func}
We will again use Lemma~\ref{lem: projection}.

\begin{proof}

Using line~\ref{alg: w}-\ref{alg: v} in Algorithm~\ref{alg: PIPG} we can show the following
\begin{equation*}
    \begin{aligned}
    z^j=&\pi_{\mathbb{D}}[z^{j-1}-\alpha (Pz^{j-1}+q+H^\top w^j)],\\
    w^j=&\pi_{\mathbb{K}^\circ}[w^{j-1}+\alpha(H(2z^{j-1}-z^{j-2})-g)].
\end{aligned}
\end{equation*}
Applying Lemma~\ref{lem: projection} to the above two projections and using the definition of normal cone in \eqref{eqn: normal cone}  we can show
\begin{subequations}
\begin{align}
    &\textstyle \frac{1}{\alpha}(z^{j-1}-z^j)-Pz^{j-1}-q-H^\top w^j\in N_{\mathbb{D}}(z^j),\label{eqn: z normal}\\
    &\textstyle \frac{1}{\alpha}(w^{j-1}-w^j)+H(2z^{j-1}-z^j)-g\in N_{\mathbb{K}^\circ}(w^j).\label{eqn: w normal}
\end{align}
\end{subequations}
Hence
\begin{equation*}
    \begin{aligned}
        &d(-Pz^j-q-H^\top w^j|N_{\mathbb{D}}(z^j))\\
        &\textstyle \leq \norm{\frac{1}{\alpha}(z^j-z^{j-1})-Pz^j+Pz^{j-1}}\\
        &\textstyle \leq \frac{1}{\alpha}\norm{z^j-z^{j-1}}+\norm{P(z^j-z^{j-1})}
    \end{aligned}
\end{equation*}
where the first inequality is due to the definition of distance function in \eqref{eqn: distance func} and \eqref{eqn: z normal}, the second inequality is due to the triangle inequality. Finally, combining the above inequality with the fact that \(\lambda\norm{z}\geq\norm{Pz}\) for any \(z\in\mathbb{R}^n\), we obtain the first inequality in Lemma~\ref{lem: dist func}.

Similarly, we can show the following
\begin{equation*}
    \begin{aligned}
        &d(Hz^j-g|N_{\mathbb{K}^\circ}(w^j))\\
        &\textstyle \leq \norm{\frac{1}{\alpha}(w^j-w^{j-1})+H(z^j-2z^{j-1}+z^{j-2})}\\
        &\textstyle \leq \frac{1}{\alpha}\norm{w^j-w^{j-1}}+\norm{H(z^j-2z^{j-1}+z^{j-2})},
    \end{aligned}
\end{equation*} 
where the first inequality is due to the definition of distance function in \eqref{eqn: distance func} and \eqref{eqn: w normal}, and the second inequality is due to the triangle inequality. Finally, combining the above inequality with the fact that \(\nu\norm{z}\geq\norm{Hz}\) for any \(z\in\mathbb{R}^n\), we obtain the second inequality in Lemma~\ref{lem: dist func}.

\end{proof}
\section{Proof of Lemma~\ref{lem: separating} }
\label{app: lemB}
We will use Lemma~\ref{lem: Moreau}, Lemma~\ref{lem: projection} and the following result.

\begin{lemma}\cite[Lem. 3.2]{banjac2021asymptotic}\label{lem: seq proj}
Let \(\{y^j\}_{j\in\mathbb{N}}\) be a sequence such that \(y^j\in\mathbb{R}^n\) for all \(j\in\mathbb{N}\) and there exists \(\overline{y}\in\mathbb{R}^n\) such that \(\lim_{j\to\infty} \frac{y^j}{j}=\overline{y}\). Let \(\mathbb{D}\subset\mathbb{R}^n\) be a closed convex set. 
\begin{enumerate}
    \item \(y^j-\pi_{\mathbb{D}}[y^j]\in (\rec \mathbb{D})^\circ\).
    \item \(\lim\limits_{j\to\infty}\frac{\pi_{\mathbb{D}}[y^j]}{j}=\pi_{\rec \mathbb{D}}[\overline{y}]\).
    \item \(\lim\limits_{j\to\infty}\frac{y^j-\pi_{\mathbb{D}}[y^j]}{j}=\pi_{(\rec \mathbb{D})^\circ}[\overline{y}]\).
    \item \(\lim\limits_{j\to\infty} \frac{\langle \pi_{\mathbb{D}}[y^j], y^j-\pi_{\mathbb{D}}[y^j]\rangle}{j}=\sigma_{\mathbb{D}}(\pi_{(\rec \mathbb{D})^\circ}[\overline{y}])\).
\end{enumerate}
\end{lemma}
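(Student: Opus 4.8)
The plan is to establish the four items in the order (1), (2), (3), (4), since each later item leans on the earlier ones, and to use only the projection characterization (Lemma~\ref{lem: projection}), the Moreau decomposition of the closed convex cone \(\rec\mathbb{D}\) (Lemma~\ref{lem: Moreau}), and the elementary facts about recession and polar cones recalled in Section~\ref{subsec: preliminaries}. Throughout I would write \(q^j\coloneqq \pi_{\mathbb{D}}[y^j]\) and \(p^j\coloneqq y^j-\pi_{\mathbb{D}}[y^j]\), so that \(y^j=q^j+p^j\) with \(q^j\in\mathbb{D}\), and set \(\overline{q}\coloneqq\pi_{\rec\mathbb{D}}[\overline{y}]\), \(\overline{p}\coloneqq\pi_{(\rec\mathbb{D})^\circ}[\overline{y}]\). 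For claim (1) I would apply Lemma~\ref{lem: projection} to \(q^j=\pi_{\mathbb{D}}[y^j]\), which gives \(\langle p^j, z'-q^j\rangle\leq 0\) for every \(z'\in\mathbb{D}\). Fixing any \(d\in\rec\mathbb{D}\) and testing with \(z'=q^j+td\in\mathbb{D}\), valid for all \(t\geq 0\) since \(\rec\mathbb{D}\) is a cone and \(q^j\in\mathbb{D}\) (see \eqref{eqn: rec cone}), yields \(t\langle p^j,d\rangle\leq 0\), hence \(\langle p^j,d\rangle\leq 0\); as \(d\) was arbitrary, \(p^j\in(\rec\mathbb{D})^\circ\) by \eqref{eqn: polar cone}.

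Claim (2) is the core. First I would show \(\{q^j/j\}\) is bounded: for a fixed \(z_0\in\mathbb{D}\), \(\norm{p^j}=d(y^j|\mathbb{D})\leq\norm{y^j-z_0}\), so \(\norm{q^j}\leq\norm{z_0}+2\norm{y^j}\), and \(\norm{q^j}/j\) stays bounded because \(\norm{y^j}/j\to\norm{\overline{y}}\). Let \(\overline{q}\) be any limit point, say \(q^{j_k}/j_k\to\overline{q}\). Since the convex combinations \(\tfrac{1}{j_k}q^{j_k}+(1-\tfrac{1}{j_k})z\in\mathbb{D}\) converge to \(\overline{q}+z\) and \(\mathbb{D}\) is closed, \(\overline{q}+z\in\mathbb{D}\) for every \(z\in\mathbb{D}\), i.e. \(\overline{q}\in\rec\mathbb{D}\) by \eqref{eqn: rec cone}. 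Next I would identify \(\overline{q}\) through the optimality conditions for projection onto the cone \(\rec\mathbb{D}\). Lemma~\ref{lem: projection} applied to \(q^j\) gives \(\langle q^j-y^j, z'-q^j\rangle\geq 0\) for all \(z'\in\mathbb{D}\); dividing by \(j\) and passing to the limit along the subsequence with the three choices \(z'=q^j+jd\) (\(d\in\rec\mathbb{D}\)), \(z'=q^j+j\overline{q}\), and \(z'=z_0\) produces, respectively, \(\langle\overline{y}-\overline{q},d\rangle\leq 0\) for all \(d\in\rec\mathbb{D}\), \(\langle\overline{y}-\overline{q},\overline{q}\rangle\leq 0\), and \(\langle\overline{y}-\overline{q},\overline{q}\rangle\geq 0\); the last two give \(\langle\overline{y}-\overline{q},\overline{q}\rangle=0\). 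Together these yield \(\langle\overline{q}-\overline{y},d-\overline{q}\rangle\geq 0\) for all \(d\in\rec\mathbb{D}\), so Lemma~\ref{lem: projection} certifies \(\overline{q}=\pi_{\rec\mathbb{D}}[\overline{y}]\). Since every limit point of the bounded sequence equals this same vector, the whole sequence converges, proving (2).

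Claim (3) then follows at once: \(p^j/j=y^j/j-q^j/j\to\overline{y}-\pi_{\rec\mathbb{D}}[\overline{y}]\), and since \(\rec\mathbb{D}\) is a nonempty closed convex cone, Lemma~\ref{lem: Moreau} gives \(\overline{y}=\pi_{\rec\mathbb{D}}[\overline{y}]+\pi_{(\rec\mathbb{D})^\circ}[\overline{y}]\), so the limit equals \(\overline{p}\). For claim (4) I would first record the finite-\(j\) identity \(\langle q^j,p^j\rangle=\sigma_{\mathbb{D}}(p^j)\): the projection inequality gives \(\langle p^j,z'\rangle\leq\langle p^j,q^j\rangle\) for all \(z'\in\mathbb{D}\), while \(q^j\in\mathbb{D}\) shows the supremum defining \(\sigma_{\mathbb{D}}(p^j)\) is attained at \(q^j\). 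By positive homogeneity of the support function this rewrites as \(\tfrac1j\langle q^j,p^j\rangle=\sigma_{\mathbb{D}}(p^j/j)\), so (4) reduces to \(\sigma_{\mathbb{D}}(p^j/j)\to\sigma_{\mathbb{D}}(\overline{p})\), with \(p^j/j\to\overline{p}\) by (3).

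The main obstacle is exactly this last limit, because \(\sigma_{\mathbb{D}}\) is convex and lower semicontinuous but need not be continuous, and \(\overline{p}\) may sit on the boundary of its effective domain. Lower semicontinuity (equivalently, the bound \(\tfrac1j\langle q^j,p^j\rangle\geq\langle z_0,p^j/j\rangle\to\langle z_0,\overline{p}\rangle\) for every \(z_0\in\mathbb{D}\), with supremum over \(z_0\)) delivers \(\liminf_j\sigma_{\mathbb{D}}(p^j/j)\geq\sigma_{\mathbb{D}}(\overline{p})\). The reverse inequality is the delicate one: here I would exploit that \(q^j\in\mathbb{D}\) forces \(\langle q^j,\overline{p}\rangle\leq\sigma_{\mathbb{D}}(\overline{p})\) for every \(j\), and combine this with the boundedness of \(q^j/j\) from (2), which is precisely what prevents the residual directions \(p^j/j\) from approaching a point of discontinuity of \(\sigma_{\mathbb{D}}\) pathologically and what must be used to control the cross term \(\langle q^j,p^j/j-\overline{p}\rangle\). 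I expect this support-function continuity argument, rather than the cone-projection bookkeeping of (1)--(3), to be where the real work lies.
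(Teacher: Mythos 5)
First, a point about the comparison you asked for: the paper contains no proof of this lemma at all --- it is imported verbatim, with citation, from the reference \cite{banjac2021asymptotic} --- so your proposal can only be judged against the statement itself. On that score, your treatment of items (1)--(3) is correct and complete. Item (1) by testing the variational inequality of Lemma~\ref{lem: projection} at \(z'=q^j+td\), \(d\in\rec\mathbb{D}\), \(t\geq 0\), is exactly right; in (2), the bound \(\|q^j\|\leq \|z_0\|+2\|y^j\|\), the closedness argument showing \(\tfrac{1}{j_k}q^{j_k}+(1-\tfrac{1}{j_k})z\to\overline{q}+z\in\mathbb{D}\) so that \(\overline{q}\in\rec\mathbb{D}\), and the three test points \(z'=q^j+jd\), \(z'=q^j+j\overline{q}\), \(z'=z_0\) that pin every cluster point down to \(\pi_{\rec\mathbb{D}}[\overline{y}]\), all check out; (3) then follows from Lemma~\ref{lem: Moreau} as you say. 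This is a clean, self-contained derivation, arguably more elementary than the usual route in the literature (nonexpansiveness reduction to the ray \(t\overline{y}\) plus the classical limit \(\pi_{\mathbb{D}}[t\overline{y}]/t\to\pi_{\rec\mathbb{D}}[\overline{y}]\)).

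Item (4) is where your proposal breaks, and the break is not repairable by the device you sketch. Quantitatively, the cross term is \(\langle q^j, p^j/j-\overline{p}\rangle = j\,\langle q^j/j,\ p^j/j-\overline{p}\rangle\), i.e.\ \(j\cdot o(1)\): boundedness of \(q^j/j\) is useless because \(\|q^j\|\) itself grows like \(j\). Worse, \emph{no} argument can close the upper-semicontinuity half from the stated hypotheses, because item (4) as transcribed is false. Take \(\mathbb{D}=\{x\in\mathbb{R}^2\,|\,x_2\geq x_1^2\}\), fix \(c>0\), and set \(y^j=\bigl((1+2c)\sqrt{j},\ j-c\bigr)\). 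Then \(\pi_{\mathbb{D}}[y^j]=(\sqrt{j},\,j)\), since for every \(z\in\mathbb{D}\) one has \(\langle y^j-(\sqrt{j},j),\ z-(\sqrt{j},j)\rangle = c\,(2\sqrt{j}\,z_1-z_2-j)\leq -c\,(z_1-\sqrt{j})^2\leq 0\), which is the certificate of Lemma~\ref{lem: projection}. Here \(y^j/j\to\overline{y}=(0,1)\), \(\rec\mathbb{D}=\{0\}\times\mathbb{R}_+\), \(\pi_{(\rec\mathbb{D})^\circ}[\overline{y}]=0\), so the claimed limit in (4) is \(\sigma_{\mathbb{D}}(0)=0\); yet \(\tfrac{1}{j}\langle \pi_{\mathbb{D}}[y^j],\ y^j-\pi_{\mathbb{D}}[y^j]\rangle=\tfrac{1}{j}(2cj-cj)=c>0\) for every \(j\). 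Items (1)--(3) all hold in this example (consistent with your proofs), only the \(\liminf\geq\) half of (4) --- which you did prove correctly via \(\sigma_{\mathbb{D}}(p^j/j)\geq\langle z_0,p^j/j\rangle\) --- survives; note the example even has \(y^j-y^{j-1}\to\overline{y}\), so convergent increments do not rescue the claim either. The conclusion is that the usc half must come from structure stronger than anything quoted here --- for instance the exact-ray case \(y^j=j\overline{y}\) (on which the example collapses, since \(j\overline{y}\in\mathbb{D}\)) or finer properties of the algorithm-generated sequences, which is presumably what \cite{banjac2021asymptotic} actually uses. Your closing instinct that ``the real work lies'' in this support-function limit was exactly right; what you missed is that at this level of generality it is not work but an impossibility, and any blind proof attempt of (4) as stated was doomed from the start.
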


We will also use the following fact: given sequences \(\{y^j\}_{j\in\mathbb{N}}\) and \(\{z^j\}_{j\in\mathbb{N}}\) where \(x^j, y^j\in\mathbb{R}^n\) for all \(j\), if the limits \(\lim_{j\to\infty} y^j\) and \(\lim_{j\to\infty} z^j\) both exists and are finite-valued, then
\begin{equation}\label{eqn: limit inner}
    \lim_{j\to\infty} \langle y^j, z^j\rangle=\langle \lim_{j\to\infty} y^j, \lim_{j\to\infty} z^j \rangle.
\end{equation}

\begin{proof}
We start by showing the two limits. From Proposition~\ref{prop: PIPG avg op} and Lemma~\ref{lem: min dis} we know that there exists \(\overline{z}\in\mathbb{R}^n\) and \(\overline{w}\in\mathbb{R}^m\) such that 
\begin{subequations}\label{eqn: zv min dis}
    \begin{align}
        &\lim_{j\to\infty}\textstyle \frac{z^j}{j}=\lim\limits_{j\to\infty} z^j-z^{j-1}=\overline{z},\label{eqn: z min dis}\\
        &\lim_{j\to\infty}\textstyle \frac{v^j}{j}=\lim\limits_{j\to\infty}v^j-v^{j-1}=\overline{w},\label{eqn: v min dis}
    \end{align}
\end{subequations}
In addition, using line~\ref{alg: v} in Algorithm~\ref{alg: PIPG} one can show the following
\begin{equation*}
   \begin{aligned}
       v^j-v^{j-1}=&w^j-w^{j-1}+\alpha H(z^j-2z^{j-1}+z^{j-2}),\\
       \textstyle\frac{v^j}{j}=&\textstyle\frac{w^j}{j}+\frac{\alpha}{j} H(z^j-z^{j-1}),
   \end{aligned}
\end{equation*}
Letting \(j\to\infty\) in the above two equations, then using \eqref{eqn: z min dis} and \eqref{eqn: v min dis} we can show the following
\begin{equation}
    \lim_{j\to\infty} \textstyle\frac{w^j}{j}=\lim\limits_{j\to\infty}w^{j+1}-w^j=\overline{w}.\label{eqn: w min dis}
\end{equation}
The above equation and \eqref{eqn: z min dis} together give the two limits in Lemma~\ref{lem: separating}.

Next, we show that \(\overline{z}\in\rec\mathbb{D}\), \(\overline{w}\in\mathbb{K}^\circ\) and the conditions in \eqref{eqn: pd inf} hold. To this end, let
\begin{subequations}\label{eqn: zw hat}
\begin{align}
    \hat{z}^j\coloneqq &z^{j-1}-\alpha (Pz^{j-1}+q+H^\top w^j)\label{eqn: z hat}\\
    \hat{w}^j\coloneqq & w^{j-1}+\alpha (H(2z^{j-1}-z^{j-2})-g)\label{eqn: w hat}
\end{align}
\end{subequations}
Then using \eqref{eqn: z min dis} and \eqref{eqn: w min dis} we can show that the following two limits
\begin{subequations}
    \begin{align}
         & \lim_{j\to\infty}\textstyle \frac{\hat{z}^j}{j}= \overline{z}-\alpha (P\overline{z}+H^\top \overline{w})\\
         & \lim_{j\to\infty}\textstyle \frac{\hat{w}^j}{j}=\overline{w}+\alpha H\overline{z}
    \end{align}
\end{subequations}
Further, using line~\ref{alg: w}-\ref{alg: v} in Algorithm~\ref{alg: PIPG} we can show that
\begin{equation}\label{eqn: zw proj}
    z^j=\pi_{\mathbb{D}}[\hat{z}^j],\enskip w^j=\pi_{\mathbb{K}^\circ}[\hat{w}^j].
\end{equation}
Hence, by applying Lemma~\ref{lem: seq proj} to sequences \(\{\hat{z}^j\}_{j\in\mathbb{N}}\) and \(\{\hat{w}^j\}_{j\in\mathbb{N}}\) we can show that 
\begin{subequations}
\begin{align}
      &\hat{z}^j-z^j\in (\rec \mathbb{D})^\circ, \enskip \hat{w}^j-w^j\in \mathbb{K},\label{eqn: diff rec}\\
      &\lim_{j\to\infty} \textstyle \frac{z^j}{j}=\overline{z}=\pi_{\rec \mathbb{D}}[\overline{z}-\alpha (P\overline{z}+H^\top \overline{w})],\label{eqn: proj rec D}\\
      & \begin{aligned}
          \lim_{j\to\infty} \textstyle \frac{\hat{z}^j-z^j}{j}=&-\alpha (P\overline{z}+H^\top \overline{w})\\
          =&\pi_{(\rec \mathbb{D})^\circ}[\overline{z}-\alpha (P\overline{z}+H^\top \overline{w})],
      \end{aligned}\label{eqn: proj rec D polar}\\
      &\lim_{j\to\infty} \textstyle \frac{w^j}{j}=\overline{w}=\pi_{\mathbb{K}^\circ}[\overline{w}+\alpha H \overline{z}],\label{eqn: proj K polar}\\
      &\lim_{j\to\infty} \textstyle \frac{\hat{w}^j-w^j}{j}=\alpha H\overline{z}=\pi_{\mathbb{K}}[\overline{w}+\alpha H \overline{z}],\label{eqn: proj K}\\
      &\lim_{j\to\infty} \textstyle \frac{\langle z^j, \hat{z}^j-z^j\rangle}{j}=\sigma_{\mathbb{D}}(-\alpha (P\overline{z}+H^\top \overline{w})),\label{eqn: sup D}\\
      &\lim_{j\to\infty} \textstyle \frac{\langle w^j, \hat{w}^j-w^j\rangle}{j}=\sigma_{\mathbb{K}^\circ}(\alpha H\overline{z})=0,\label{eqn: sup K polar}
\end{align}
\end{subequations}
where we used \eqref{eqn: z min dis}, \eqref{eqn: w min dis}, and the fact that \(\rec \mathbb{K}^\circ=\mathbb{K}^\circ\) and \((\mathbb{K}^\circ)^\circ=\mathbb{K}\). Further, \eqref{eqn: sup K polar} is due to \eqref{eqn: proj K} and the definition of support function and polar cone in \eqref{eqn: support} and \eqref{eqn: polar cone}, respectively. Notice that \eqref{eqn: proj rec D}, \eqref{eqn: proj K polar} and \eqref{eqn: proj K} imply the following
\begin{equation}\label{eqn: zw rec}
\overline{z}\in\rec \mathbb{D}, \enskip \overline{w}\in\mathbb{K}^\circ, \enskip H\overline{z}\in\mathbb{K}.
\end{equation}
Further, by applying Lemma~\ref{lem: Moreau} to the projections in \eqref{eqn: proj rec D}, \eqref{eqn: proj rec D polar}, \eqref{eqn: proj K polar} and \eqref{eqn: proj K}, we obtain the following
\begin{equation}\label{eqn: complementary}
    \langle \overline{z}, P\overline{z}+H^\top \overline{w}\rangle=0,\enskip \langle \overline{w}, H\overline{z}\rangle=0.
\end{equation}
Combining the above two equalities and the fact that \(P\) is symmetric and positive semidefinite, we obtain the following 
\begin{equation}\label{eqn: Pz=0}
    P\overline{z}=0
\end{equation}

Equipped with \eqref{eqn: zw rec} and \eqref{eqn: Pz=0}, our emaining task is to prove the last two equalities in \eqref{eqn: pd inf}. To this end, using  \eqref{eqn: diff rec}, \eqref{eqn: proj rec D}, \eqref{eqn: proj K polar} and the definition of polar cone we can show that
\begin{equation}\label{eqn: polar inner}
    \begin{aligned}
        &\langle z^{j-1}-z^j-\alpha (q+H^\top w^j), \overline{z}\rangle\leq 0, \\
        &\langle w^{j-1}-w^j+\alpha (H(2z^{j-1}-z^{j-2})-g), \overline{w}\rangle\leq 0,
    \end{aligned}
\end{equation}
where we used the definition of \(\hat{z}^j\) and \(\hat{w}^j\) in \eqref{eqn: zw hat}, and the equality in \eqref{eqn: Pz=0}. By multiplying both inequalities in \eqref{eqn: polar inner} by \(\frac{1}{\alpha j}\) and letting \(j\to\infty\) we obtain the following
\begin{subequations}\label{eqn: two bounds}
    \begin{align}
        &\textstyle \frac{1}{\alpha}\norm{\overline{z}}^2+\langle q, \overline{z}\rangle\geq -\lim\limits_{j\to\infty} \langle w^j, H\overline{z}\rangle\geq 0, \label{eqn: two bounds a}\\
        & \textstyle \frac{1}{\alpha}\norm{\overline{w}}^2+ \langle g, \overline{w}\rangle\geq \lim\limits_{j\to\infty}\langle z^{j-1}, H^\top \overline{w}\rangle\geq - \sigma_{\mathbb{D}}(-H^\top \overline{w}),\label{eqn: two bounds b}
    \end{align}
\end{subequations}
where the last step in \eqref{eqn: two bounds a} is due to \eqref{eqn: zw proj}, \eqref{eqn: proj K}, and the definition of polar cone in \eqref{eqn: polar cone}. Further, the last step in \eqref{eqn: two bounds b} is due to the definition of support function in \eqref{eqn: support}. Summing up the two inequalities in \eqref{eqn: two bounds} we obtain the following
\begin{equation}\label{eqn: ineq 1}
    \textstyle \sigma_{\mathbb{D}}(-H^\top \overline{w})+\langle q, \overline{z}\rangle+\langle \overline{w}, g\rangle+\frac{1}{\alpha}\norm{\overline{z}}+\frac{1}{\alpha}\norm{\overline{w}}^2\geq 0.
\end{equation}

We will show that the above inequality actually holds as an equality as follows.
First, by using \eqref{eqn: limit inner}, \eqref{eqn: z min dis}, and \eqref{eqn: proj rec D polar} we can show the following
\begin{equation}\label{eqn: z upper}
    \textstyle \lim\limits_{j\to\infty}\frac{1}{j}\langle \hat{z}^j-z^j, z^j-z^{j-1}\rangle=\langle -\alpha (P\overline{z}+H^\top \overline{w}), \overline{z}\rangle\leq 0,
\end{equation}
where the last step is due to  \eqref{eqn: proj rec D polar} and the definition of polar cone in \eqref{eqn: polar cone}. In addition, by applying Lemma~\ref{lem: projection} to the first projection in \eqref{eqn: zw proj} we can show
\begin{equation}\label{eqn: z lower}
    \langle \hat{z}^j-z^j, z^j-z^{j-1}\rangle\geq 0
\end{equation}
Combining \eqref{eqn: z upper} and \eqref{eqn: z lower} gives the following
\begin{equation}\label{eqn: z inner zero}
    \textstyle 0=\lim\limits_{j\to\infty}\frac{1}{j}\langle \hat{z}^j-z^j, z^j-z^{j-1}\rangle.
\end{equation}
Second,  one can show the following
\begin{equation}\label{eqn: z inner}
    \begin{aligned}
        &\lim_{j\to\infty}\textstyle \frac{1}{j}\langle z^j-\hat{z}^j, z^{j-1}\rangle\\
        &=\lim_{j\to\infty}\textstyle \langle z^j-z^{j-1}+\alpha(Pz^{j-1}+q+H^\top w^j), \frac{z^{j-1}}{j}\rangle\\
        &=\textstyle\norm{\overline{z}}^2+\alpha\langle q, \overline{z}\rangle+\alpha \lim\limits_{j\to\infty} \frac{1}{j}\langle Pz^{j-1}+H^\top w^j, z^{j-1}\rangle\\
        &\geq\textstyle \norm{\overline{z}}^2+\alpha\langle q, \overline{z}\rangle+\alpha \lim\limits_{j\to\infty} \frac{1}{j}\langle H^\top w^j, z^{j-1}\rangle,
    \end{aligned}
\end{equation}
where the first step is due to \eqref{eqn: z hat}, the second step is due to \eqref{eqn: limit inner} and \eqref{eqn: z min dis} and the last step is due to the positive semidefiniteness of matrix \(P\). Third, we can show that following
\begin{equation}\label{eqn: w inner}
\begin{aligned}
    &0=\lim_{j\to\infty}\textstyle \frac{\langle w^j-\hat{w}^j, w^j\rangle}{j}\\
    &=\lim_{j\to\infty}\textstyle \langle w^j-w^{j-1}-\alpha(H(2z^{j-1}-z^{j-2})-g), \frac{w^j}{j}\rangle\\
    &=\norm{w}^2+\langle \overline{w}, g\rangle-\langle H \overline{z}, \overline{w}\rangle-\alpha \lim_{j\to\infty}\textstyle \frac{1}{j}\langle H^\top w^j, z^{j-1}\rangle,
\end{aligned}
\end{equation}
where the first equality is due to \eqref{eqn: sup K polar}, the second equality is due to \eqref{eqn: w hat}, and the last equality is due to \eqref{eqn: limit inner}, \eqref{eqn: z min dis} and \eqref{eqn: w min dis}.
By summing up both sides of \eqref{eqn: sup D}, \eqref{eqn: z inner zero}, \eqref{eqn: z inner} and \eqref{eqn: w inner} then multiplying the resulting inequality by \(\frac{1}{\alpha}\), we obtain the following
\begin{equation*}
    \textstyle \sigma_{\mathbb{D}}(-H^\top \overline{w})+\langle q, \overline{z}\rangle+\langle \overline{w}, g\rangle+\frac{1}{\alpha}\norm{\overline{z}}+\frac{1}{\alpha}\norm{\overline{w}}^2\leq 0,
\end{equation*}
where we also used \eqref{eqn: complementary}, \eqref{eqn: Pz=0}, and the fact that \(\alpha \sigma_{\mathbb{D}}(-H^\top\overline{w})=\sigma_{\mathbb{D}}(-\alpha H^\top\overline{w})\) when \(\alpha>0\). Combining the above inequality with the one in \eqref{eqn: ineq 1}, we conclude that the inequality in \eqref{eqn: ineq 1} holds as an equality. As a result, both inequalities in \eqref{eqn: two bounds} also hold as equalities, which completes the proof.
\end{proof}

\section{Transformation from optimal control problems to conic optimization problems}
\label{app: transform}

We let \(\mathbf{1}_n\) and \(\mathbf{0}_n\) denote the \(n\)-dimensional vectors of all \(1\)'s and all \(0\)'s, respectively. Let \(\diag (c)\) denote the diagonal matrix whose diagonal elements are given by vector \(c\). Further, we let
\begin{equation*}
    \begin{aligned}
    &\overline{A}=\begin{bmatrix}
     I_{6(\tau-1)}\\
     0_{6\times 6(\tau-1)}
    \end{bmatrix}-\begin{bmatrix}
    0_{6\times 6(\tau-1)}\\
    I_{\tau-1} \otimes A 
    \end{bmatrix},\\
    &\overline{B}=-I_\tau\otimes
    B,\enskip \overline{C}=I_\tau\otimes \begin{bmatrix}
    0 & 0 & 1
    \end{bmatrix} ,\\
    & \overline{D}=I_{\tau-1}\otimes \begin{bmatrix}
    I_3 & 0_{3\times 3}\\
    -I_3 & 0_{3\times 3}
    \end{bmatrix},\enskip \overline{E}=I_{\tau-1}\otimes \begin{bmatrix}
    \underline{r}_1-\underline{r}_2\\
     \overline{r}_2-\overline{r}_1
    \end{bmatrix},
    \end{aligned}
\end{equation*}
and 
\begin{equation*}
\begin{aligned}
    & H_1 = \begin{bmatrix}
    \overline{A} & \overline{B} \\
    0_{\tau\times 6(\tau-1)} & \overline{C}
    \end{bmatrix},\\
    &g_1 = \begin{bsmallmatrix}
    (Ax_0+h)^\top & (\mathbf{1}_{\tau-2}\otimes h)^\top & (-x_\tau+h)^\top & \rho_1\mathbf{1}_\tau^\top
    \end{bsmallmatrix}^\top,\\
    &H_2 = \begin{bmatrix}
    \overline{D} & 0_{6(\tau-1)\times 3\tau} & \overline{E}
    \end{bmatrix},\enskip g_2=\mathbf{1}_{\tau-1}\otimes\begin{bmatrix}
    \underline{r}_1\\
    -\overline{r}_1
    \end{bmatrix}\\
    & P_1 =\diag \left(\begin{bmatrix}\mathbf{0}_{6(\tau-1)}^\top & \mathbf{1}_{3\tau}^\top  \end{bmatrix}^\top\right), q_1=\mathbf{0}_{9\tau-6},\\
    & P_2 =\diag \left(\begin{bmatrix}\mathbf{0}_{6(\tau-1)}^\top & \mathbf{1}_{3\tau}^\top & \mathbf{0}_{\tau-1}^\top \end{bmatrix}^\top\right), q_2=\mathbf{0}_{10\tau-7}.
\end{aligned}
\end{equation*}
Further, we let 
\begin{equation*}
    \begin{aligned}
    \mathbb{X}_1 =&\{r\in\mathbb{R}^3| \norm{r}\cos\beta\leq [r]_2\}\times \{r\in\mathbb{R}^3|\norm{r}\leq \eta\},\\
    \mathbb{X}_2 =&\mathbb{R}^3\times \{r\in\mathbb{R}^3|\norm{r}\leq \eta\},\\
    \mathbb{U}_1=&\{u\in\mathbb{R}^{3}|\norm{u}\cos\theta\leq [u]_3,\norm{u}\leq \rho_2\},\\
     \mathbb{K}_1=&\{\mathbf{0}_{6\tau}\}\times \mathbb{R}_+^{\tau}.
    \end{aligned}
\end{equation*}
With the above definitions, we are ready to transform the optimal control problems in Section~\ref{sec: experiment} into space cases of conic optimization \eqref{opt: conic}. Optimization \eqref{opt: landing} is the special case of \eqref{opt: conic} where
\begin{equation*}
    \begin{aligned}
    &z = \begin{bmatrix}
    x_1^\top & \ldots x_{\tau-1}^\top & u_0^\top & \ldots & u_{\tau-1}^\top
    \end{bmatrix}^\top,\\
    & P=P_1, \enskip q=q_1, \enskip H=S_1H_1, \enskip g=S_1g_1,\enskip \mathbb{K}=\mathbb{K}_1,\\ 
    &\mathbb{D}= (\mathbb{X}_1)^{j-1}\times \{x_\tau\}^{\tau-j}\times (\mathbb{U}_1)^\tau,
    \end{aligned}
\end{equation*}
where \(S_1\) is a diagonal matrix with positive diagonal entries such that the rows in matrix \(H\) have unit \(\ell_2\) norm. And optimization \eqref{opt: landing} is the special case of \eqref{opt: conic} where
\begin{equation*}
    \begin{aligned}
    &z = \begin{bmatrix}
    x_1^\top & \ldots x_{\tau-1}^\top & u_0^\top & \ldots & u_{\tau-1}^\top & b_1 & \ldots b_{\tau-1}
    \end{bmatrix}^\top\\
    & H = S_2\begin{bmatrix}
    \begin{bmatrix}
    H_1 & 0_{7\tau\times (\tau-1)}
    \end{bmatrix}\\
    H_2
    \end{bmatrix}, \enskip g=S_2\begin{bmatrix}
    g_1\\
    g_2
    \end{bmatrix},\\
    &P=P_2, \enskip q=q_2,\enskip \mathbb{K}=\mathbb{K}_1\times \mathbb{R}_+^{6(\tau-1)},\\ &\mathbb{D}= (\mathbb{X}_2)^{\tau-1}\times (\mathbb{U}_1)^\tau\times \{1-\hat{b}\}\times [0, 1]^{\tau-2},
\end{aligned}
\end{equation*}
where \(S_2\) is a diagonal matrix with positive diagonal entries such that the rows in matrix \(H\) have unit \(\ell_2\) norm. Notice that, in both problems, we use matrix \(S_1\) and \(S_2\) to ensure the constraints parameters \(H\) and \(g\) are well-conditioned.


\bibliographystyle{IEEEtran}
\bibliography{IEEEabrv,reference}

\end{document}